\newtheorem{theorem}{Theorem}
\newtheorem{acknowledgement}[theorem]{Acknowledgement}
\newtheorem{conjecture}[theorem]{Conjecture}
\newtheorem{corollary}[theorem]{Corollary}
\newtheorem{lemma}[theorem]{Lemma}
\newenvironment{proof}[1][Proof]{\noindent\textbf{#1.} }{\ \rule{0.5em}{0.5em}}
\begin{document}

\title{Additive colorings of planar graphs}
\author{Jaros\l aw Grytczuk \\
{\small Faculty of Mathematics and Information Science,}\\
{\small \ Warsaw University of Technology, 00-661 Warszawa, Poland}\\
{\small \ e-mail: grytczuk@mini.pw.edu.pl} \and Tomasz Bartnicki, Sebastian
Czerwi\'{n}ski \\
{\small Faculty of Mathematics, Computer Science, and Econometrics,}\\
{\small \ University of Zielona G\'{o}ra, 65-516 Zielona G\'{o}ra, Poland}\\
{\small \ e-mail: t.bartnicki@wmie.uz.zgora.pl, s.czerwinski@wmie.uz.zgora.pl%
} \and Bart\l omiej Bosek, Grzegorz Matecki, Wiktor \.{Z}elazny \\
{\small Faculty of Mathematics and Computer Science,}\\
{\small \ Jagiellonian University, 30-348 Krak\'{o}w, Poland}\\
{\small \ e-mail: bosek@tcs.uj.edu.pl, matecki@tcs.uj.edu.pl,
zelazny@tcs.uj.edu.pl}}
\maketitle

\begin{abstract}
An \emph{additive coloring} of a graph $G$ is an assignment of positive
integers $\{1,2,\ldots ,k\}$ to the vertices of $G$ such that for every two
adjacent vertices the sums of numbers assigned to their neighbors are
different. The minimum number $k$ for which there exists an additive
coloring of $G$ is denoted by $\eta (G)$. We prove that $\eta (G)\leqslant
468$ for every planar graph $G$. This improves a previous bound $\eta
(G)\leqslant 5544$ due to Norin. The proof uses Combinatorial
Nullstellensatz and coloring number of planar hypergrahs. We also
demonstrate that $\eta (G)\leqslant 36$ for $3$-colorable planar graphs, and 
$\eta (G)\leqslant 4$ for every planar graph of girth at least $13$. In a
group theoretic version of the problem we show that for each $r\geqslant 2$
there is an $r$-chromatic graph $G_{r}$ with no additive coloring by
elements of any Abelian group of order $r$.
\end{abstract}

\section{Introduction}

Let $G$ be a simple graph, and let $k$ be a positive integer. By a \emph{%
coloring} of $G$ we mean any function $f$ from the set of vertices $V(G)$ to
the set $\{1,2,\ldots ,k\}$. Given a coloring $f$, consider the induced
function $S=S(f)$ on the set $V(G)$ defined by the formula%
\begin{equation*}
S(v)=\sum_{x\in N(v)}f(x),
\end{equation*}%
where $N(v)$ denotes the set of neighbors of the vertex $v$ in $G$. The
initial coloring $f$ is called an \emph{additive coloring} of $G$ if $%
S(u)\neq S(v)$ for every pair of adjacent vertices $u$ and $v$. The minimum
number $k$ for which there exists an additive coloring of $G$ is denoted by $%
\eta (G)$.

The notion of additive coloring was introduced in \cite{CzerwinskiGZ} as a
vertex version of the 1-2-3-conjecture of Karo\'{n}ski, \L uczak, and
Thomason \cite{KaronskiLT}. In the original problem the numbers are assigned
to the edges of a graph, and prospective color of a vertex $v$ is derived as
the sum of numbers assigned to the edges incident to $v$. It is conjectured
that for every connected graph (except $K_{2}$) one can produce a proper
vertex coloring in this way using only three numbers--- 1, 2, and 3.
Currently best bound is $5$, as proved by Kalkowski, Karo\'{n}ski, and
Pfender \cite{KalkowskiKP}.

In the related additive coloring problem no finite bound is possible since
for cliques we have $\eta (K_{n})=n$. We conjecture however, that perhaps $%
\eta (G)\leqslant \chi (G)$ for every graph $G$, where $\chi (G)$ denotes
the usual chromatic number. This conjecture is widely open as it is not
known whether $\eta (G)$ is bounded for bipartite graphs. In \cite%
{CzerwinskiGZ} we proved that $\eta (G)\leqslant 3$ for planar bipartite
graphs, and also that $\eta (G)\leqslant 100280245065$ for general planar
graphs. The later bound was improved to $5544$ by Norin (personal
communication). We present this proof in section 2 for completeness.

In this note we obtain a further improvement of this bound. Our main result
asserts that $\eta (G)\leqslant 468$ for every planar graph $G$. The proof
uses Combinatorial Nullstellensatz of Alon \cite{AlonCN}, and the coloring
number of hyperhraphs represented by planar bipartite graphs. For planar
graphs of girth at least $13$ we get a much better bound by $4$, using a
decomposition theorem from \cite{BuCranston}.

\section{Coloring number of graphs and hypergraphs}

We start with presenting an unpublished result of Norin. Recall that the
coloring number $\func{col}(G)$ of a graph $G$ is the least integer $k$ such
that there exists a linear ordering of the vertices $v_{1},\ldots ,v_{n}$
such that the number of \emph{backward} neighbors of $v_{i}$ (those
contained in the set $\{v_{1},\ldots ,v_{i-1}\}$) is at most $k-1$, for
every $i=1,2,\ldots ,n$. It is well known that $\func{col}(G)\leqslant 6$
for every planar graph $G$.

\begin{theorem}
\label{Norin}\emph{(S. Norin)} Let $G$ be a graph with chromatic number $%
\chi (G)=r$ and coloring number $\func{col}(G)=k$. Let $n_{1},\ldots ,n_{r}$
be $r$ pairwise coprime integers, with $n_{i}\geqslant k$ for all $%
i=1,2,\ldots ,k$. Then $\eta (G)\leqslant n_{1}\times \ldots \times n_{r}$.
In particular, $\eta (G)\leqslant 5544$ for every planar graph $G$ (by
taking $n_{1}=7$, $n_{2}=8$, $n_{3}=9$, and $n_{4}=11$).
\end{theorem}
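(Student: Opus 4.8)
The plan is to work modulo the coprime factors simultaneously via the Chinese Remainder Theorem and to use the proper coloring to decide, for each edge, in which modulus its two endpoints will be forced apart. Write $N=n_{1}\cdots n_{r}$. Since the $n_{i}$ are pairwise coprime, the ring map $\mathbb{Z}_{N}\to\mathbb{Z}_{n_{1}}\times\cdots\times\mathbb{Z}_{n_{r}}$ is an isomorphism, so prescribing a color $f(v)\in\{0,1,\ldots,N-1\}$ is the same as prescribing its residues $f_{m}(v)\in\mathbb{Z}_{n_{m}}$ independently for each $m$. For any coloring, reducing the induced sum modulo $n_{m}$ gives $S(v)\equiv S_{m}(v):=\sum_{x\in N(v)}f_{m}(x)\pmod{n_{m}}$; hence if for some coordinate $m$ we have $S_{m}(u)\ne S_{m}(v)$ in $\mathbb{Z}_{n_{m}}$, then automatically $S(u)\ne S(v)$ as integers. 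So it suffices to choose the residue functions $f_{1},\ldots,f_{r}$ so that every edge is separated in at least one coordinate. (To land in $\{1,\ldots,N\}$ rather than $\{0,\ldots,N-1\}$ I would replace a color $0$ by $N$; as $N\equiv 0\pmod{n_{m}}$ for every $m$, all residues and hence all the $S_{m}$ are unaffected.)

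First I would fix a proper coloring $c\colon V(G)\to\{1,\ldots,r\}$ with classes $V_{1},\ldots,V_{r}$ and make coordinate $m$ responsible exactly for those edges $uv$ with $\max\{c(u),c(v)\}=m$; every edge is then the responsibility of a unique coordinate. To decouple the coordinates I would set $f_{m}\equiv 0$ off $V_{m}$. Because $V_{m}$ is an independent set, every $w\in V_{m}$ then has $S_{m}(w)=0$, so for an edge $wx$ with $w\in V_{m}$ and $c(x)<m$ the separation requirement $S_{m}(w)\ne S_{m}(x)$ reduces to the single condition
\[
\sum_{y\in N(x)\cap V_{m}} f_{m}(y)\;\ne\;0\quad\text{in }\mathbb{Z}_{n_{m}}.
\]
Thus the whole problem splits into $r$ independent subproblems of the same shape: color the independent set $V_{m}$ by elements of $\mathbb{Z}_{n_{m}}$ so that the neighborhood-sum of every relevant outside vertex $x$ is nonzero.

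I would solve each subproblem greedily, exploiting $|\mathbb{Z}_{n_{m}}|=n_{m}\ge k=\operatorname{col}(G)$. Fix the linear ordering witnessing $\operatorname{col}(G)=k$, in which every vertex has at most $k-1$ backward neighbors, and restrict it to $V_{m}$. Each constraint attached to an outside vertex $x$ is finalized at the last vertex of $N(x)\cap V_{m}$ in this ordering; when that vertex $y$ receives its value, the sum in the displayed condition is completely determined except for $f_{m}(y)$, and as this sum is an affine function of $f_{m}(y)$ there is exactly one forbidden value of $f_{m}(y)\in\mathbb{Z}_{n_{m}}$ for this $x$. Assigning the vertices of $V_{m}$ in the order induced from the col-ordering, at each vertex $y$ I only need a value avoiding the forbidden values of the constraints finalized at $y$; if this number is at most $n_{m}-1$, a legal choice exists and the greedy goes through.

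The hard part is precisely bounding, for each $y\in V_{m}$, the number of neighborhood-sum constraints finalized there. The col-ordering immediately controls the constraints coming from outside vertices $x$ that precede $y$ (these $x$ are backward neighbors of $y$, of which there are at most $k-1$), but the outside vertices following $y$ are not bounded by backward degree and must be handled by a more careful charging using the ordering together with the sparsity inherited from $G$; this is the step where the hypotheses $\operatorname{col}(G)=k$ and $n_{i}\ge k$ are really consumed, and where the harder quantitative input (the coloring number of the bipartite incidence structure) would enter for a sharper bound. Once each subproblem is solved, CRT recombination yields an additive coloring with colors in $\{1,\ldots,N\}$, giving $\eta(G)\le N=n_{1}\cdots n_{r}$. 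For a planar graph $G$ one has $\chi(G)\le 4$ and $\operatorname{col}(G)\le 6$, so choosing the pairwise coprime $n_{1}=7,\ n_{2}=8,\ n_{3}=9,\ n_{4}=11$, all $\ge 6$, yields $\eta(G)\le 7\cdot 8\cdot 9\cdot 11=5544$.
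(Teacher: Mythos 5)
Your CRT wrapper and the reduction to per-class subproblems are sound, but the proof has a genuine gap exactly where you flag it, and it is not a gap that "more careful charging" can close: the quantity you need to bound is simply not controlled by $\operatorname{col}(G)$. You assign the edge $uv$ to the coordinate $m=\max\{c(u),c(v)\}$ and then need an ordering of $V_{m}$ in which each $y\in V_{m}$ finalizes at most $n_{m}-1$ of the constraints $\sum_{z\in N(x)\cap V_{m}}f_{m}(z)\neq 0$. Consider the incidence graph of $K_{t}$: vertices $y_{1},\ldots ,y_{t}$ together with a vertex $x_{ab}$ joined to $y_{a}$ and $y_{b}$ for each pair $a<b$. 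This graph is bipartite and every subgraph has a vertex of degree at most $2$, so $\chi(G)=2$ and $\operatorname{col}(G)=3$; yet if the class $\{y_{1},\ldots ,y_{t}\}$ plays the role of $V_{m}$, then in \emph{every} ordering of $V_{m}$ the last vertex finalizes the $t-1$ constraints coming from the pairs containing it. So the number of constraints finalized at a vertex can be arbitrarily large while $\operatorname{col}(G)$ stays at $3$, and no ordering (col-derived or otherwise) repairs this within your counting scheme. (In this example the subproblem happens to be solvable anyway, e.g.\ by $f_{m}\equiv 1$, since the many forbidden values coincide; but your greedy framework counts constraints, not distinct forbidden values, and there is no general argument of the kind you postulate.)

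The paper's proof (Norin's) avoids the problem by tying the responsibility for an edge to the \emph{ordering} rather than to the maximum of the colors. Process all of $V(G)$ in one ordering witnessing $\operatorname{col}(G)=k$, give each vertex $v$ a weight $n(v)\in \mathbb{Z}_{n_{c(v)}}$, and for an edge $uv$ with $u$ before $v$ require only that coordinate $c(v)$ separates it, i.e.\ $S_{c(v)}(u)\neq 0=S_{c(v)}(v)$. Then the constraints relevant when choosing $n(v)$ are exactly those attached to the at most $k-1$ backward neighbors $u$ of $v$, each forbidding a single value of $n(v)$; since $n_{c(v)}\geqslant k$, a legal value exists. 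If the sum $S_{c(v)}(u)$ is later perturbed by a vertex $w$ of color $c(v)$ adjacent to $u$ and processed after $v$, then $u$ is also a backward neighbor of $w$, so the same constraint is re-enforced when $w$ is processed, and in particular at the last such $w$, when all weights entering the sum are final. This alignment between "which coordinate handles an edge" and "which endpoint comes later" is precisely what your max-color assignment destroys, and it is the missing idea; with it, the hypotheses $n_{i}\geqslant k$ suffice, and the remaining steps (recombining coordinates via the isomorphism $\mathbb{Z}_{n_{1}}\times \cdots \times \mathbb{Z}_{n_{r}}\cong \mathbb{Z}_{N}$ and trading the label $0$ for $N$) are as in your write-up.
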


\begin{proof}
Fix a proper coloring $c$ of a graph $G$ using colors $\{1,2,\ldots ,r\}$.
Also, fix a linear ordering of the vertices realizing $\func{col}(G)=k$. Let 
$n_{1},\ldots ,n_{r}$ be any positive integers such that $\gcd
(n_{i},n_{j})=1$ for every pair $i\neq j$, with $n_{i}\geqslant k$ for all $%
i=1,2,\ldots ,r$. Suppose now that each vertex $v$ is assigned with certain
weight $n(v)\in \mathbb{Z}_{n_{j}}$, with $j=c(v)$. Denote by $S_{i}(v)$ the
sum of weights of all the neighbors of $v$ in color $i$. More formally,%
\begin{equation*}
S_{i}(v)=\dsum\limits_{x\in N(v)\cap c^{-1}(i)}n(x),
\end{equation*}%
where summation is in the group $\mathbb{Z}_{n_{i}}$. Finally, let $%
S(v)=(S_{1}(v),\ldots ,S_{r}(v))$.

Since no neighbor of $v$ is colored with $c(v)$, we have $S_{j}(v)=0$ for $%
j=c(v)$. Our aim is to modify weights $n(v)$ greedily so that $%
S_{c(v)}(u)\neq 0$ for every backward neighbor $u$ of $v$. This will imply
that $S(u)\neq S(v)$ for every pair of adjacent vertices $u$ and $v$.

Suppose we have achieved this property for all vertices up to $v_{i-1}$ by
choosing appropriate weights $n(v_{1}),\ldots ,n(v_{i-1})$. Now we have to
find a weight for the vertex $v_{i}$. Let $j=c(v_{i})$. For every backward
neighbor $u$ of $v_{i}$ there is only one value of $n(v_{i})$ making $%
S_{j}(u)=0(\func{mod}n_{j})$. Since there are at most $k-1$ backward
neighbors of $v_{i}$, there are only $k-1$ forbidden values for $n(v_{i})$.
Since $n_{j}>k-1$, there is a free element of $\mathbb{Z}_{n_{j}}$ for the
weight $n(v_{i})$.

To get an additive coloring of graph $G$ we assign to every vertex $v$, an
element $f(v)=(f_{1}(v),\ldots ,f_{r}(v))$ of the group $\mathbb{Z}%
_{n_{1}}\times \ldots \times \mathbb{Z}_{n_{r}}$, defined by $f_{j}(v)=n(v)$
if $j=c(v)$, and $f_{j}(v)=0$, otherwise. This completes the proof, as the
group $\mathbb{Z}_{n_{1}}\times \ldots \times \mathbb{Z}_{n_{r}}$ is
isomorphic to $\mathbb{Z}_{N}$, where $N=n_{1}\times \ldots \times n_{r}$.
\end{proof}

The notion of coloring number can be generalized in a natural way for
hypergraphs. Given a hypergraph $H$ and a linear ordering of the vertices $%
v_{1},\ldots ,v_{n}$, define the \emph{backward degree} of vertex $v_{i}$ as
the number of \emph{different} hyperedges of the form $\{v_{j}\}\cup A$,
with $A\subseteq \{v_{1},\ldots ,v_{i-1}\}$ (we allow $A$ to be empty). The 
\emph{coloring number} $\func{col}(H)$ of hypergraph $H$ is the minimum $k$
such that in some linear ordering of the vertices all backward degrees are
at most $k-1$. This definition differs slightly from the one given in \cite%
{Kierstead}, but it is appropriate for our purposes.

\begin{lemma}
\label{col(H)}Let $H$ be a hypergraph with $\func{col}(H)=k$. Then there is
a function $f:V(H)\rightarrow \mathbb{Z}_{k}$ such that every hyperedge $B$
satisfies%
\begin{equation*}
\dsum\limits_{v\in B}f(v)\neq 0(\func{mod}k).
\end{equation*}
\end{lemma}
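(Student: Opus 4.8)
The plan is to run a greedy assignment along the vertex ordering that witnesses $\func{col}(H)=k$. First I would fix such an ordering $v_{1},\ldots ,v_{n}$, so that every vertex has backward degree at most $k-1$. I then build $f$ one vertex at a time, processing $v_{1},v_{2},\ldots ,v_{n}$ in order, maintaining the invariant that after step $i$ every hyperedge contained in $\{v_{1},\ldots ,v_{i}\}$ already satisfies $\dsum_{v\in B}f(v)\neq 0\ (\func{mod}k)$.

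The key observation is that a hyperedge $B$ acquires all of its vertices precisely at the moment we reach its largest vertex in the ordering. Writing $v_{i}=\max B$, we have $B=\{v_{i}\}\cup A$ with $A\subseteq \{v_{1},\ldots ,v_{i-1}\}$, so at step $i$ the partial sum $s_{A}=\dsum_{x\in A}f(x)$ is already determined. The requirement $\dsum_{x\in B}f(x)\neq 0$ then reads $f(v_{i})\neq -s_{A}\ (\func{mod}k)$, ruling out exactly one element of $\mathbb{Z}_{k}$. Thus each hyperedge whose maximum vertex is $v_{i}$ forbids a single value of $f(v_{i})$.

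By the definition of backward degree there are at most $k-1$ hyperedges of the form $\{v_{i}\}\cup A$ with $A\subseteq \{v_{1},\ldots ,v_{i-1}\}$, hence at most $k-1$ forbidden values for $f(v_{i})$. Since $|\mathbb{Z}_{k}|=k$, at least one admissible value remains, and I assign it to $f(v_{i})$. This preserves the invariant: every hyperedge with maximum vertex $v_{i}$ now has nonzero sum, while hyperedges settled at earlier steps are left untouched. After step $n$ the invariant yields that every hyperedge of $H$ has sum $\neq 0\ (\func{mod}k)$, which is the desired conclusion. The degenerate case $A=\emptyset$, where $B=\{v_{i}\}$ simply forbids $f(v_{i})=0$, is covered by the same count.

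The only point requiring genuine care is the bookkeeping that assigns each hyperedge to a unique step, namely the step of its maximum vertex. This matching is exactly what makes the number of forbidden values at $v_{i}$ equal to its backward degree, and what guarantees that no hyperedge is overlooked nor double-counted. Beyond verifying this correspondence there is no real obstacle: the argument is the hypergraph analogue of the greedy step in Theorem~\ref{Norin}, and it goes through directly.
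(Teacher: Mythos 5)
Your proposal is correct and is essentially the paper's own proof: the paper also processes a linear ordering realizing $\func{col}(H)=k$ greedily, noting that at each step at most $k-1$ partial sums must be accounted for and each forbids exactly one value of $\mathbb{Z}_{k}$. Your write-up merely makes explicit the bookkeeping (each hyperedge is handled at the step of its maximum vertex) that the paper's terse three-sentence proof leaves implicit.
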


\begin{proof}
Start with a linear ordering of the vertices realizing $\func{col}(H)$ and
proceed greedily in that order. At each step there are at most $k-1$ partial
sums we have to account, and each of them is reset by exactly one value.
Hence, there is always a good choice for the next value of $f$.
\end{proof}

Now we give an upper bound for the coloring number of hypergraphs arising
from bipartite planar graphs.

\begin{lemma}
\label{colBP}Let $G$ be a bipartite planar graph with bipartition classes $X$
and $Y$. Let $H$ be a hypergraph on the set of vertices $X$ whose incidence
graph is $G$. Then $\func{col}(H)\leqslant 12$. In particular, there exists
a coloring $f:X\rightarrow $ $\mathbb{Z}_{12}$ satisfying condition:%
\begin{equation*}
\dsum\limits_{x\in N(y)}f(x)\neq 0(\func{mod}12)
\end{equation*}%
for every non-isolated vertex $y\in Y$.
\end{lemma}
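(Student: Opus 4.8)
The plan is to read $\func{col}(H)$ as a degeneracy parameter and reduce the statement to a local counting fact about planar bipartite graphs. First I would record the standard reformulation: $\func{col}(H)\le 12$ holds if and only if every sub-hypergraph $H'$ of $H$ (induced on some $S\subseteq X$, with hyperedges exactly those edges $B\subseteq S$) contains a vertex lying in at most $11$ distinct hyperedges of $H'$. Indeed, building the linear order from the last vertex backwards, the backward degree of a vertex equals the number of surviving hyperedges that contain it, so a good elimination order exists precisely when every sub-hypergraph has such a low-degree vertex. Thus it suffices to find, in each $H'$, a vertex of $S$ contained in at most $11$ of its distinct hyperedges.

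Next I would encode $H'$ geometrically. For each distinct hyperedge of $H'$ I pick one representative $y\in Y$ with $N(y)\subseteq S$, and let $G'$ be the incidence graph on $S$ together with these representatives. Since $G'$ is an induced subgraph of $G$, it is planar and bipartite, its vertices on the $Y$-side have pairwise distinct neighborhoods, and the number of distinct hyperedges of $H'$ containing $x$ equals $\deg_{G'}(x)$. So the whole problem becomes the following: a plane bipartite graph with parts $S$ and $Y'$, where the vertices of $Y'$ have distinct neighborhoods, must have a vertex in $S$ of degree at most $11$. I would then apply Euler's formula after sorting the hyperedges by size. Singletons contribute at most one incidence per vertex of $S$, hence at most $|S|$ edges. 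Pairs form a simple plane graph on $S$ (suppress each degree-$2$ vertex of $Y'$ into an edge joining its two neighbors; distinct neighborhoods forbid parallel edges), hence there are at most $3|S|-6$ of them. Hyperedges of size at least $3$ span a bipartite planar subgraph whose $Y'$-vertices all have degree $\ge 3$, so there are at most $2|S|-4$ of them and at most $6|S|-12$ corresponding incidences.

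The delicate point — and the main obstacle — is that simply adding these three bounds gives roughly $13|S|$ incidences, i.e.\ average $S$-degree below $13$, which by itself only yields the weaker estimate $\func{col}(H)\le 13$. To reach $12$ one cannot treat the size classes independently, because the pair-edges and the size-$3$ hyperedges cannot both be maximal at every vertex at once: the extremal configuration collapses to a triangulation of $S$, with its faces carrying the size-$3$ hyperedges, and every triangulation has a vertex of degree at most $5$, forcing that vertex of $S$ to total degree at most $11$. I would therefore recast Step~3 as a single discharging argument on $G'$ that redistributes the surplus charge of the dense pair-structure and pins the minimum $S$-degree at $11$. The estimate is tight, as witnessed by the icosahedral triangulation of $S$ with one representative placed inside each face and one singleton at each vertex, where every vertex of $S$ has degree exactly $2\cdot 5+1=11$.

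Finally, the ``in particular'' clause is immediate. The hyperedges of $H$ are precisely the sets $N(y)$ for non-isolated $y\in Y$, so once $\func{col}(H)\le 12$ is established, applying Lemma \ref{col(H)} with $k=12$ produces a map $f:X\to\mathbb{Z}_{12}$ with $\sum_{x\in N(y)}f(x)\neq 0\ (\func{mod}\,12)$ for every non-isolated $y\in Y$, as required.
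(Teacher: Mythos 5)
Your reduction of $\func{col}(H)\leqslant 12$ to finding a vertex of hypergraph degree at most $11$ in every twin-free sub-hypergraph is sound, and it matches the paper's own strategy (the paper deletes a vertex of degree at most $11$, notes that the absence of multiple hyperedges is hereditary, and iterates). Your three counting bounds are also correct: at most $|S|$ singleton incidences, at most $2(3|S|-6)$ incidences from pairs, and at most $6|S|-12$ incidences from hyperedges of size at least three. But, as you yourself observe, this only gives average degree below $13$, hence only $\func{col}(H)\leqslant 13$ --- and the proof stops exactly where the lemma begins. The entire content of the bound $12$ is the claim that every twin-free planar hypergraph has a vertex of degree at most $11$, and for this you offer only the assertion that ``the extremal configuration collapses to a triangulation'' together with the promise of ``a single discharging argument'' whose charges and rules are never specified. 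The structural assertion is itself unproven: nothing forces the pair hyperedges to triangulate $S$, nor the larger hyperedges to sit exactly in the faces of that triangulation; singletons, pairs and large hyperedges can mix at a single vertex in many ways, and ruling out a configuration in which some sub-hypergraph has minimum degree $12$ is precisely the difficulty. An argument that identifies the worst case and checks only that case is not a proof unless the identification is proved.

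For comparison, the paper closes this gap with a geometric device rather than discharging: each hyperedge $N(y)$ is replaced by a simple closed curve through its vertices, turning $H$ into a plane pseudograph $P$ on the vertex set $X$ whose faces are properly $2$-colored so that black faces are in bijection with the hyperedges. Contracting loops and $2$-sided faces yields a pseudograph $Q$ all of whose faces have at least three vertices, so Euler's formula gives a vertex $x$ of degree at most $5$ in $Q$; twin-freeness bounds the multiplicity of each $Q$-edge at $x$ by $4$ in $P$ (an alternating black/white sequence of $2$-gons with at most one black one has length at most $3$), and allows at most one loop, so $\deg_P(x)\leqslant 22$. Since black and white faces alternate around $x$, at most $11$ black faces --- that is, hyperedges --- contain $x$. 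Until you supply either this argument or a fully worked-out discharging scheme, your proposal establishes only $\func{col}(H)\leqslant 13$, which would also weaken the constants in the paper's main theorems. (Your final ``in particular'' step, applying Lemma \ref{col(H)} with $k=12$, and your tightness example are both fine and agree with the paper.)
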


\begin{proof}
We may assume that no two vertices in $Y$ are \emph{twins} (have exactly the
same nonempty neighborhood), as multiple hyperedges do not count in backward
degree. We shall prove that hypergraph $H$ always contains a vertex of the
usual degree at most $11$. This is sufficient since a hypergraph $H-x$ still
does not contain multiple hyperedges, (therefore the incidence graph of $H-x$
does not contain twins) and we may order the vertices of $H$ by sequential
deletion of such vertices.

Fix an embedding of $G$ in the plane. Transform this embedding into a new
plane graph $P$ in the following way. For every vertex $y\in Y$, draw a
simple closed curve $C(y)$ through the neighbors of $y$ within $\varepsilon $%
-distance from the connecting edges, so that a simply connected region $F(y)$
arises with the following properties:

\begin{enumerate}
\item All neighbors of $y$ belong to $C(y)$.

\item All other points of the edges connecting $y$ to its neighbors (and $y$
itself) are in the interior of $F(y)$.

\item No other points of the embedding of $G$ are in $F(y)$.
\end{enumerate}

Forget now about $y$'s and their edges inside regions $F(y)$. In this way we
get a plane (pseudo)graph $P$ on the set of vertices $X$ whose faces can be
properly $2$-colored: color the faces $F(y)$ by black and all other faces by
white. Notice that hypergedes of $H$ turned into black faces in $P$. Hence, $%
\deg _{H}(v)$ is just the number of black faces incident to $v$.

We claim that there is always a vertex in $P$ incident to at most $11$ black
faces. First, shrink all loops and all $2$-sided faces of $P$ to get a new
pseudograph $Q$ whose faces have at least three vertices. Let $v$, $e$, and $%
f$ denote the number of vertices, edges, and faces in $Q$, respectively. So,
we have $3f\leqslant 2e$, and by Euler's formula we get $e\leqslant 3v-6$.
Hence, there must be a vertex $x$ of degree at most $5$ in $Q$. Now, by the
lack of twins in $G$, each edge incident to $x$ in $Q$ has multiplicity at
most $4$ in $P$. Also, there can be at most one loop at each vertex in $P$,
by the same reason. Therefore degree of $x$ in $P$ is at most $22$, and
there are at most $11$ black faces incident to $x$. The proof of the lemma
is complete.
\end{proof}

It is worth noticing that the above lemma is tight. To see this take the
icosahedron on the vertex set $X$ and modify it in the following way: (1)
subdivide each edge and each face of the icosahedron with one new vertex,
(2) append a hanging edge to each vertex from $X$. The resulting graph is a
twin-free planar bipartite graph in which every vertex in $X$ has degree $11$%
.

\section{Combinatorial Nullstellensatz}

For the proof of our main result we will need a simple consequence of the
celebrated Combinatorial Nullstellensatz of Alon. For the sake of
completeness we provide also an elegant, simple proof due to Micha\l ek \cite%
{Michalek}.

\begin{theorem}
\label{Null}\emph{(Combinatorial Nullstellensatz)} Let $\mathbb{F}$ be an
arbitrary field, and let $P(x_{1},\ldots ,x_{n})$ be a polynomial in the
ring of polynomials $\mathbb{F}[x_{1},\ldots ,x_{n}]$. Suppose that there is
a nonvanishing monomial $x_{1}^{k_{1}}\ldots x_{n}^{k_{n}}$ in $P$ such that 
$k_{1}+\ldots +k_{n}=\deg (P)$. Then for every subsets $A_{1},\ldots ,A_{n}$
of the field $\mathbb{F}$, with $\left\vert A_{i}\right\vert \geqslant
k_{i}+1$, there are elements $a_{i}\in A_{i}$ such that $P(a_{1},\ldots
,a_{n})\neq 0$.
\end{theorem}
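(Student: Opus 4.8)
The plan is to follow Micha\l ek's approach, which avoids reducing $P$ modulo the products $\prod_{c\in A_i}(x_i-c)$ and instead isolates the coefficient of the critical monomial through a single weighted sum over the grid. First I would observe that it suffices to treat the case $|A_i|=k_i+1$ for every $i$: if the sets are larger, replace each $A_i$ by an arbitrary $(k_i+1)$-element subset, since a point at which $P$ is nonzero inside the smaller grid is a fortiori a point of the original grid. Then, arguing by contradiction, I would assume that $P(a_1,\ldots,a_n)=0$ for every $(a_1,\ldots,a_n)\in A_1\times\cdots\times A_n$, and consider the quantity
\[
\Sigma=\sum_{a_1\in A_1}\cdots\sum_{a_n\in A_n}\frac{P(a_1,\ldots,a_n)}{\prod_{i=1}^{n}\varphi_i(a_i)},\qquad \varphi_i(a)=\prod_{c\in A_i\setminus\{a\}}(a-c).
\]
By the contradiction hypothesis every numerator vanishes, so $\Sigma=0$.

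The second, independent evaluation of $\Sigma$ rests on a one-variable identity: for a single set $A$ of size $d+1$ with weights $\varphi(a)=\prod_{c\in A\setminus\{a\}}(a-c)$, one has $\sum_{a\in A} a^{m}/\varphi(a)=0$ for $0\le m<d$ and $=1$ for $m=d$. I would prove this by recognizing the left-hand side as the leading (degree-$d$) coefficient of the Lagrange interpolating polynomial of $x\mapsto x^{m}$ through the $d+1$ nodes of $A$: for $m\le d$ that interpolant is $x^m$ itself, whose coefficient of $x^d$ is $1$ if $m=d$ and $0$ otherwise. Expanding $P$ into monomials $\prod_i x_i^{m_i}$ and using the multiplicativity of the sum over the product grid, I would obtain
\[
\Sigma=\sum_{(m_1,\ldots,m_n)} c_{m_1,\ldots,m_n}\prod_{i=1}^{n}\left(\sum_{a\in A_i}\frac{a^{m_i}}{\varphi_i(a)}\right),
\]
where $c_{m_1,\ldots,m_n}$ denotes the coefficient of $\prod_i x_i^{m_i}$ in $P$.

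Now the degree hypothesis does the decisive work. By the identity the $i$-th factor vanishes whenever $m_i<k_i$, so only monomials with $m_i\ge k_i$ for all $i$ can contribute; but any contributing monomial also satisfies $m_1+\cdots+m_n\le\deg(P)=k_1+\cdots+k_n$, and combined with $m_i\ge k_i$ this forces $m_i=k_i$ for every $i$. For that single surviving monomial each inner factor equals $1$, so $\Sigma$ collapses to $c_{k_1,\ldots,k_n}$, the coefficient of $x_1^{k_1}\cdots x_n^{k_n}$, which is nonzero by assumption. This contradicts $\Sigma=0$ and finishes the proof. I expect the main obstacle to be the clean proof of the one-variable summation identity together with the correct bookkeeping that pairs the degree bound $\sum_i m_i\le\sum_i k_i$ with the vanishing of the factor when $m_i<k_i$; the elegance lies precisely in the fact that no monomial with some $m_i>k_i$ can survive, because an excess in one coordinate forces a deficiency in another, killing the corresponding factor.
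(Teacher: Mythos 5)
Your proof is correct, but it is not the argument the paper gives (nor, despite your opening sentence, Micha\l ek's): what you have written is the proof via the so-called coefficient formula, usually attributed to Schauz and Laso\'n. The paper follows Micha\l ek's argument, which is an induction on $\deg (P)$: assuming $k_{1}\geqslant 1$, one fixes $a\in A_{1}$, performs the division $P=(x_{1}-a)Q+R$ with $R$ free of $x_{1}$, and, supposing $P$ vanishes on the grid, first deduces that $R$ vanishes on all of $A_{1}\times \cdots \times A_{n}$ (by evaluating at $x_{1}=a$), then that $Q$ vanishes on $(A_{1}\setminus \{a\})\times A_{2}\times \cdots \times A_{n}$, contradicting the inductive hypothesis applied to $Q$ and its monomial $x_{1}^{k_{1}-1}x_{2}^{k_{2}}\cdots x_{n}^{k_{n}}$. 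Your route instead evaluates the weighted sum $\Sigma =\sum P(a_{1},\ldots ,a_{n})/\prod_{i}\varphi _{i}(a_{i})$ in two ways, using the Lagrange-interpolation identity $\sum_{a\in A}a^{m}/\varphi (a)=\delta _{m,d}$ for $0\leqslant m\leqslant d$ and $|A|=d+1$; your bookkeeping is exactly right --- a monomial survives only if $m_{i}\geqslant k_{i}$ for every $i$, while the degree bound gives $\sum m_{i}\leqslant \sum k_{i}$, forcing $m_{i}=k_{i}$ --- and the one-variable identity is correctly justified by uniqueness of interpolation through $d+1$ distinct nodes, which holds over any field, so there is no characteristic issue and no division by zero. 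The trade-off between the two approaches: the paper's induction is shorter and more elementary, needing nothing beyond long division of polynomials; your argument is non-inductive and actually proves more, namely the identity $c_{k_{1},\ldots ,k_{n}}=\sum P(a_{1},\ldots ,a_{n})/\prod_{i}\varphi _{i}(a_{i})$ over the grid, a quantitative statement that yields the theorem as an immediate corollary and has further uses beyond the bare existence assertion.
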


\begin{proof}
We will proceed by induction on the degree of polynomial $P$. If $\deg (P)=0$%
, then $P$ is a nonzero constant polynomial and the assertion holds
trivially. Let $\deg (P)\geqslant 1$ and suppose the theorem is true for all
polynomials of strictly smaller degree. Hence, for at least one $i\in
\{1,\ldots ,n\}$ we must have $k_{i}\geqslant 1$. Assume, for simplicity,
that $k_{1}\geqslant 1$, and let $a\in A_{1}$ be a fixed element. Using the
algorithm of long division of polynomials, we may write%
\begin{equation}
P=(x_{1}-a)Q+R.  \tag{*}
\end{equation}

Indeed, we may treat $P$ as a polynomial in one variable $x_{1}$ with
coefficients in the ring $\mathbb{F}[x_{2},\ldots ,x_{n}]$ and perform long
division by the polynomial $(x_{1}-a)$ to determine uniquely quotient $Q$
and remainder $R$. Since $\deg (x_{1}-a)=1$, the remainder $R$ must be a
constant in $\mathbb{F}[x_{2},\ldots ,x_{n}]$, which means that it does not
contain variable $x_{1}$. Hence, by the assumption on the nonvanishing
monomial in $P$, the quotient $Q$ must have a nonvanishing monomial $%
x^{k_{1}-1}x_{2}^{k_{2}}\ldots x_{n}^{k_{n}}$ and $\deg
(Q)=(k_{1}-1)+k_{2}+\ldots +k_{n}$.

Suppose on the contrary that $P(x)$ vanishes on the set $A_{1}\times \ldots
\times A_{n}$. Take any element $x\in \{a\}\times A_{2}\times \ldots \times
A_{n}$ and substitute to equation $(\ast )$. Since $P(x)=0$, we get that $%
R(x)=0$. But $R$ does not contain variable $x_{1}$, so it follows that $R$
also vanishes on the whole set $A_{1}\times \ldots \times A_{n}$. Take now
any $x\in (A_{1}\setminus \{a\})\times A_{2}\times \cdots \times A_{n}$ and
substitute to equation $(\ast )$. Since $P(x)=0$, $R(x)=0$, and $%
(x_{1}-a)\neq 0$, it follows that $Q(x)=0$. This means that $Q$ vanishes on
the whole set $(A_{1}\setminus \{a\})\times A_{2}\times \cdots \times A_{n}$%
, which contradicts the inductive assumption.
\end{proof}

The above theorem has many surprising applications in geometry,
combinatorics, and number theory \cite{AlonCN}. We used it in \cite%
{CzerwinskiGZ} to prove that every planar bipartite graph has an additive
coloring from arbitrary lists of size at least three. Below we give a slight
extension of this result, which will be useful later.

\begin{theorem}
\label{ListBip}Let $G$ be a bipartite graph whose edges can be oriented so
that each vertex has indegree at most $k$. Suppose that each vertex $v$ is
assigned with a list \thinspace $L(v)$ of $k+1$ real numbers. Then for every
function $q:V(G)\rightarrow \mathbb{R}$ there is a coloring $f$ of the
vertices such that%
\begin{equation*}
q(u)+\dsum\limits_{x\in N(u)}f(x)\neq q(v)+\dsum\limits_{x\in N(v)}f(x)
\end{equation*}%
for every pair of adjacent vertices $u$ and $v$.
\end{theorem}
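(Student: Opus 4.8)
The plan is to apply the Combinatorial Nullstellensatz (Theorem \ref{Null}) to a polynomial encoding the required inequalities. Introduce a variable $x_v$ for the prospective color of each vertex $v$, and for every edge $uv$ form the linear factor
\[
L_{uv}=\Bigl(q(u)+\sum_{w\in N(u)}x_w\Bigr)-\Bigl(q(v)+\sum_{w\in N(v)}x_w\Bigr),
\]
and set $P=\prod_{uv\in E(G)}L_{uv}$. An assignment $x_v=f(v)$ with $f(v)\in L(v)$ and $P\neq 0$ is precisely a coloring with the desired property, since each factor records that the two augmented sums at the endpoints of an edge disagree. Each factor is linear, so $\deg P=|E(G)|$. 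First I would observe that $q$ enters $P$ only through the constant terms of the factors, so the top-degree homogeneous part of $P$, and hence the coefficient of any monomial of degree $|E(G)|$, does not depend on $q$; this is what lets a single argument cover all $q$ simultaneously.

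Next I would fix an orientation of $G$ in which every indegree is at most $k$, and take as target the monomial $\prod_v x_v^{d^-(v)}$, where $d^-(v)$ denotes the indegree of $v$. Its total degree is $\sum_v d^-(v)=|E(G)|$, each exponent is at most $k$, and each list satisfies $|L(v)|=k+1\geqslant d^-(v)+1$; so once its coefficient in $P$ is shown to be nonzero, Theorem \ref{Null} with $A_v=L(v)$ completes the proof. This coefficient is computed from the degree-one parts $\sum_{w\in N(u)}x_w-\sum_{w\in N(v)}x_w$ of the factors: it equals a signed count of \emph{selections}, where a selection $\phi$ chooses one variable $x_{\phi(e)}$ from each edge-factor $e$ so that every vertex $v$ is chosen exactly $d^-(v)$ times, each selection weighted $+1$ or $-1$ according to whether the chosen variable comes from the neighborhood of the tail or of the head of $e$. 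The canonical selection, in which each edge chooses the color variable of its head, is valid because each vertex $v$ is the head of exactly $d^-(v)$ edges; so at least one selection exists.

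The main obstacle is to rule out cancellation among these selections, and here the bipartiteness of $G$ is essential. Writing $A,B$ for the two classes, for an edge the neighborhood of its tail and the neighborhood of its head lie in opposite classes, so the sign of a selection is $(-1)^{|E^-|}$, where $E^-$ is the set of edges whose chosen variable lies in the same class as the tail. I would then show that $|E^-|$ has the same parity for every valid selection. The number $m_A$ of edges selecting an $A$-variable equals $\sum_{a\in A}d^-(a)$, a constant fixed by the target monomial. An edge oriented out of $A$ selects an $A$-variable exactly when it is backward, while an edge oriented out of $B$ does so exactly when it is forward; hence $m_A=p+(\beta-q)$, where $p$ and $q$ count backward edges oriented out of $A$ and out of $B$, and $\beta$ is the fixed number of edges oriented out of $B$. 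Thus $p-q=m_A-\beta$ is constant, and since $|E^-|=p+q\equiv p-q\pmod 2$, the parity of $|E^-|$ is constant. Consequently all valid selections carry the same sign, the coefficient equals $\pm(\text{their number})\neq 0$, and the Nullstellensatz yields the required $f$. I expect this parity bookkeeping to be the only delicate point; the rest is routine.
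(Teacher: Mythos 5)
Your proposal is correct and follows essentially the same route as the paper's proof: the same edge-product polynomial, the same target monomial read off from an orientation with indegrees at most $k$, and an application of the Combinatorial Nullstellensatz with $A_v=L(v)$. The only difference is one of bookkeeping: the paper writes every factor with the two bipartition classes in a fixed order, so each variable carries a uniform sign in all factors and non-cancellation is immediate, whereas your tail/head sign convention forces the (valid) parity argument for $|E^-|$.
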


\begin{proof}
Let $U=\{u_{1},\ldots ,u_{m}\}$ and $V=\{v_{1},\ldots ,v_{n}\}$ be the
bipartition classes of a graph $G$. Let $\{x_{1},\ldots ,x_{m}\}$ and $%
\{y_{1},\ldots ,y_{n}\}$ be the variables assigned to the vertices of these
classes, respectively. Denote by $S(u)$ the sum of variables assigned to the
neighbors of $u$. Consider a polynomial $P$ over the field of reals defined
by%
\begin{equation*}
P(x_{1},\ldots ,x_{m},y_{1},\ldots ,y_{n})=\dprod\limits_{u_{i}v_{j}\in
E(G)}(q(u_{i})+S(u_{i})-q(v_{j})-S(v_{j})).
\end{equation*}%
We claim that $P$ contains a nonvanishing monomial with exponents bounded by 
$k$. Let $\overrightarrow{G}$ be an orientation of $G$ with indegrees
bounded by $k$. In every factor of $P$ corresponding to edge $u_{i}v_{j}$
choose one of the variables $x_{i}$ or $y_{j}$--the one that corresponds to
the vertex on which the arrow points. In this way we obtain monomial $%
M=x_{1}^{k_{1}}\ldots x_{m}^{k_{m}}y_{1}^{l_{1}}\ldots y_{n}^{l_{n}}$
satisfying $0\leqslant k_{i},l_{j}\leqslant k$. Why is this monomial
nonvanishing in $P$? It is because each variable occurs in factors of $P$
with uniform sign ($x_{i}$ with minus sign, $y_{j}$ with plus sign). Hence,
the sign of monomial $M$ in $P$ is uniquely determined by the sequence of
exponents, and therefore its copies cannot cancel. Finally, to apply
Combinatorial Nullstellensatz, notice that $\deg (P)$ equals the number of
edges in $G$, which is the same as $k_{1}+\ldots +k_{m}+l_{1}+\ldots +l_{n}$
since $q(u_{i})-q(v_{j})$ are constants.
\end{proof}

\begin{corollary}
\label{TreeBP}Every tree has an additive coloring from arbitrary lists of
size two. Every bipartite planar graph has an additive coloring from
arbitrary lists of size three.
\end{corollary}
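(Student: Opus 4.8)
The plan is to obtain both statements as instances of Theorem \ref{ListBip} with the shift function $q$ chosen to be identically zero. With $q \equiv 0$ the inequality furnished by that theorem reads $\sum_{x \in N(u)} f(x) \neq \sum_{x \in N(v)} f(x)$ for every edge $uv$, which is precisely the defining condition of an additive coloring. Since a list size of $k+1$ is available whenever the bipartite graph admits an orientation of maximum indegree at most $k$, the two claims reduce to exhibiting orientations of indegree at most $1$ (for trees, giving $2$-lists) and at most $2$ (for bipartite planar graphs, giving $3$-lists).

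For a tree I would pick a root and orient every edge from parent to child. Then each non-root vertex has exactly one in-arc and the root has none, so the maximum indegree is $1$; applying Theorem \ref{ListBip} with $k=1$ yields the first statement. For a bipartite planar graph $G$ the key step is to bound its arboricity. Every subgraph $H$ of $G$ is again bipartite and planar, hence has girth at least $4$ whenever it contains a cycle, and so satisfies $|E(H)| \le 2|V(H)| - 4$ by Euler's formula when $|V(H)| \ge 3$ (the cases $|V(H)| \le 2$ being immediate, and the acyclic case giving the even better bound $|E(H)| \le |V(H)|-1$). Consequently $|E(H)| / (|V(H)| - 1) < 2$ for every subgraph with at least two vertices, so the Nash--Williams formula gives arboricity at most $2$. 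Thus $G$ decomposes into two forests $F_1$ and $F_2$; rooting and orienting each forest away from its roots as before produces an orientation in which every vertex has indegree at most $1$ in each $F_i$, hence at most $2$ overall. Feeding this orientation into Theorem \ref{ListBip} with $k=2$ and $3$-element lists completes the second statement.

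The only substantive step is the bipartite planar orientation, and its crux is the arboricity bound rather than the coloring itself. Two points deserve care: the edge-density inequality must be verified for all subgraphs $H$, not merely for $G$, since Nash--Williams is a hereditary condition, and one must handle the degenerate small cases where the Euler bound does not literally apply. I note that the weaker route through degeneracy does not suffice here: the bound $|E(H)| \le 2|V(H)| - 4$ only guarantees a vertex of degree at most $3$ in each subgraph, so greedy peeling yields indegree $3$ and hence lists of size $4$. Reaching indegree $2$ genuinely requires the global decomposition into two forests (equivalently, Hakimi's orientation criterion $|E(H)| \le 2|V(H)|$, which here follows from $|E(H)| \le 2|V(H)|-4$), and this is the technical heart of the argument.
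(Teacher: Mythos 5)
Your proof is correct and follows the paper's own route exactly: both statements are instances of Theorem \ref{ListBip} with $q\equiv 0$, using an orientation of indegree at most $1$ for trees and at most $2$ for bipartite planar graphs. The only difference is one of detail --- the paper simply asserts the existence of the indegree-$2$ orientation, while you supply its justification via the Nash--Williams arboricity bound (correctly noting that mere $3$-degeneracy would only give lists of size four).
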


\begin{proof}
Every tree has an orientation with at most one incoming edge to every
vertex. Every bipartite planar graph has an orientation with indegrees
bounded by two.
\end{proof}

\section{Main results}

Let us start with a simpler case of planar $3$-colorable graphs.

\begin{theorem}
Every planar graph $G$ with $\chi (G)\leqslant 3$ satisfies $\eta
(G)\leqslant 36$.
\end{theorem}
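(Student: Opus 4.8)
The plan is to upgrade the three-coloring to a coloring by a cyclic group and reduce the additive condition to a family of ``nonvanishing neighborhood-sum'' conditions, each of which is then discharged by one of the two tools built above, Theorem~\ref{ListBip} and Lemma~\ref{colBP}. Concretely, I would fix a proper $3$-coloring $c\colon V(G)\to\{1,2,3\}$ with independent classes $V_1,V_2,V_3$ and work in $\mathbb{Z}_{36}\cong\mathbb{Z}_4\times\mathbb{Z}_9$. Since the two factors are coprime, any assignment of pairs $(a(v),b(v))\in\mathbb{Z}_4\times\mathbb{Z}_9$ whose coordinatewise neighborhood-sums $\big(\sum_{x\in N(v)}a(x),\sum_{x\in N(v)}b(x)\big)$ differ on adjacent vertices passes, through the Chinese Remainder Theorem exactly as in the proof of Theorem~\ref{Norin}, to an integer additive coloring witnessing $\eta(G)\le 36$. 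So the whole problem becomes: build such a pair-coloring with the two coprime moduli $4$ and $9$.

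The key structural observation I would use is that the $3$-coloring partitions $E(G)$ into two \emph{bipartite planar} subgraphs: the edges meeting $V_1$ (bipartition $V_1 \mid V_2\cup V_3$) and the edges between $V_2$ and $V_3$. The idea is to separate every edge of one of these layers in the $\mathbb{Z}_9$-coordinate and every edge of the other layer in the $\mathbb{Z}_4$-coordinate, assigning the two coordinates one layer at a time. Once one coordinate has been committed, the contributions of the already-colored vertices and of the classes irrelevant to the current layer are \emph{constants}; these can be folded into the offset function $q$ of Theorem~\ref{ListBip}. A layer that can be oriented with small indegree is then colorable from short lists directly by Theorem~\ref{ListBip}, while the heavier counting is carried by the hypergraph coloring number of Lemma~\ref{colBP}; this is where the two factors $4$ and $9$, and the underlying planar bounds (indegree at most two/three and the hypergraph bound) should come from. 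This also explains the gain over Theorem~\ref{Norin}: replacing its greedy step (which demands moduli at least $\operatorname{col}(G)=6$) by the Combinatorial Nullstellensatz permits genuinely smaller moduli.

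The main obstacle is the \emph{coupling} of the two coordinates. The sum $S(v)=\sum_{x\in N(v)}f(x)$ runs over all neighbors of $v$ at once, so it mixes both group coordinates and all three color classes; the two layers therefore cannot be colored in isolation. The delicate work is to decouple them — using the offset $q$ of Theorem~\ref{ListBip} to absorb every contribution already fixed, and possibly zeroing out one coordinate on a suitable class so that each coordinate ``sees'' only a bipartite subproblem — while simultaneously keeping the indegrees (equivalently, the hypergraph coloring numbers) small enough that the two coprime moduli multiply to exactly $36$. A related subtlety is that $G$ itself is \emph{not} bipartite, so Theorem~\ref{ListBip} cannot be applied to $G$ directly: its nonvanishing-monomial argument relies on each variable occurring with a uniform sign, which fails across odd cycles. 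Splitting $E(G)$ into the two bipartite layers is precisely what restores the uniform-sign (hence nonvanishing) step, and I expect the careful bookkeeping of offsets together with this sign repair to be the heart of the argument.
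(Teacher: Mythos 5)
Your decomposition of $E(G)$ into two bipartite planar layers (the edges meeting $V_1$, and the edges between $V_2$ and $V_3$) is exactly the decomposition the paper uses, and you are also right that Theorem~\ref{ListBip} cannot be applied to the non-bipartite graph $G$ itself and that already-fixed contributions must be absorbed into the offset $q$. But the arithmetic core of your plan---realizing $36$ as the coprime product $4\times 9$ and separating one layer in the $\mathbb{Z}_4$-coordinate and the other in the $\mathbb{Z}_9$-coordinate via the Chinese Remainder Theorem---cannot be carried out with either of the two tools you invoke. Lemma~\ref{colBP} produces neighborhood sums that are nonzero only modulo $12$: its greedy proof requires the modulus to be at least the hypergraph coloring number, and for incidence hypergraphs of bipartite planar graphs this number can genuinely equal $12$ (the paper's subdivided-icosahedron example has every vertex of $X$ of degree $11$, so the lemma is tight); hence there is no ``mod $4$'' or ``mod $9$'' analogue of it available. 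And Theorem~\ref{ListBip} is intrinsically a characteristic-zero statement: it is proved via Combinatorial Nullstellensatz over $\mathbb{R}$, where the chosen monomial survives because all of its contributions carry the same sign, so its coefficient is a nonzero \emph{integer}. Over $\mathbb{Z}_4$ or $\mathbb{Z}_9$ the Nullstellensatz does not apply at all (these are not fields), and even over a prime field $\mathbb{F}_p$ that integer coefficient could vanish modulo $p$. So neither layer can be ``separated in a coordinate'' of a small cyclic group, and neither of your factors $4$ and $9$ is attainable; the numbers you hope to extract from ``indegree at most two/three and the hypergraph bound'' are in fact $3$ (list size for indegree $2$) and $12$ (the hypergraph bound), not $4$ and $9$.

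The way the paper actually reaches $36$ is not a coprime/CRT factorization but the non-coprime product $36=12\cdot 3$, read as a base-$12$ expansion. One applies Lemma~\ref{colBP} to the layer between $V_2\cup V_3$ and $V_1$ to get $h\colon V_1\to\{1,\dots,12\}$ with $S_h(u)\not\equiv 0 \pmod{12}$ for every $u\in V_2\cup V_3$ having a neighbor in $V_1$; then one applies Theorem~\ref{ListBip} to the $V_2$--$V_3$ layer with offsets $q=S_h$ and with lists $\{12,24,36\}$ consisting of \emph{multiples of} $12$. Because all labels on $V_2\cup V_3$ are $\equiv 0 \pmod{12}$, every edge into $V_1$ is separated by the residue modulo $12$ (nonzero at the $V_2\cup V_3$ end, zero at the $V_1$ end), while every $V_2$--$V_3$ edge is separated \emph{over the integers} by the Nullstellensatz conclusion; no modular statement about that layer is ever needed. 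This asymmetry---one layer handled modularly, the other integrally, interleaved through multiples of $12$---is precisely what the symmetric CRT framework of your proposal forces you to give up, and it is why your outline cannot be completed as stated.
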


\begin{proof}
Let $V(G)=A\cup B\cup C$ be a partition of the vertex set of $G$ into three
independent sets. Let $H$ be a subgraph of $G$ on the set of vertices $%
V(H)=A\cup B\cup C$ with the edge set%
\begin{equation*}
E(H)=\{uv\in E(G):u\in A\cup B\text{ and }v\in C\}.
\end{equation*}%
Clearly $H$ is a bipartite graph. Hence, by Theorem \ref{colBP}, there is a
function $h:C\rightarrow \{1,2,\ldots ,12\}$ such that the sum 
\begin{equation*}
S_{h}(u)=\dsum\limits_{x\in N_{H}(u)}h(x)
\end{equation*}%
satisfies $S_{h}(u)\neq 0(\func{mod}12)$ for every vertex $u\in A\cup B$
having at least one neighbor in $C$. For other vertices the above sum is
empty and we adopt $S_{h}(u)=0$ by convention.

Consider now a bipartite subgraph $F$ of $G$ induced by the vertices $A\cup
B $. Assign to each vertex $u$ in $F$ the list $L(u)=\{12,24,36\}$, and
apply Theorem \ref{ListBip} with function $q(u)=S_{h}(u)$. Let $f$ be a
coloring satisfying the assertion of Theorem \ref{ListBip}. That is, $f$
satisfies condition $S_{f}(u)+S_{h}(u)\neq S_{f}(v)+S_{h}(v)$ for every edge 
$uv\in E(F)$, where%
\begin{equation*}
S_{f}(u)=\dsum\limits_{x\in N_{F}(u)}f(x).
\end{equation*}%
Finally, let $g$ be a function defined on the whole set of vertices $V(G)$
by joining $f$ and $h$:%
\begin{equation*}
g(x)=\left\{ 
\begin{array}{l}
h(x)\text{ if }x\in C \\ 
f(x)\text{ if }x\in A\cup B%
\end{array}%
\right. .
\end{equation*}%
We claim that $g$ is an additive coloring of $G$ over the set $\{1,2,\ldots
,36\}$. Let $S(u)$ be the sum of $g$-labels over the whole neighborhood $%
N(u) $, that is, $S(u)=S_{h}(u)+S_{f}(u)$. Let $uv$ be any edge in $G$. If $%
u\in A\cup B$ and $v\in C$, then $S_{h}(u)\neq 0(\func{mod}12)$ and $%
S_{f}(u)=0(\func{mod}12)$, thus $S(u)\neq 0(\func{mod}12)$. On the other
hand, $S_{h}(v)=S_{f}(v)=0(\func{mod}12)$, so $S(v)=0(\func{mod}12)$. In the
other case, if $u\in A$ and $v\in B$, condition $S(u)\neq S(v)$ is
guaranteed by construction of $f$. This completes the proof.
\end{proof}

The proof for $4$-colorable planar graphs is similar in spirit, though a bit
more technical.

\begin{theorem}
Every planar graph satisfies $\eta (G)\leqslant 468$.
\end{theorem}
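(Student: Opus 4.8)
The plan is to mirror the $3$-colorable argument but with four color classes, using the four-color theorem to split $V(G)=A\cup B\cup C\cup D$ into four independent sets. The target bound $468$ factors as $468=36\times 13$, which strongly suggests a two-stage construction: first handle the interaction of $D$ with everything else using a modulus-$13$ coloring coming from the hypergraph lemma, then handle the remaining three classes $A\cup B\cup C$ using the $3$-colorable bound of $36$, scaling the latter coloring by a factor of $13$ so that the two pieces of information live in ``disjoint'' residue scales. Concretely, I would first apply Lemma \ref{colBP} to the bipartite graph $H$ between $A\cup B\cup C$ and $D$ to obtain $h\colon D\to\{1,\ldots,12\}$, but since I need a modulus of $13$ here (to leave room analogous to the $12$ in the previous proof), I instead invoke Lemma \ref{col(H)} together with Lemma \ref{colBP}: the coloring number bound $\func{col}(H)\leqslant 12$ gives a function into $\mathbb{Z}_{13}$ (since $13>12$) with all relevant neighborhood sums nonzero mod $13$.

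The key steps, in order, would be as follows. First, fix the $4$-coloring and let $D$ play the role that $C$ played before. Construct the bipartite incidence structure $H$ whose hyperedges are the $D$-neighborhoods of vertices in $A\cup B\cup C$; by Lemmas \ref{colBP} and \ref{col(H)} choose $h\colon D\to\{13,26,\ldots\}$ (multiples of some base unit) so that $\sum_{x\in N_H(u)}h(x)\not\equiv 0$ modulo $13$ for every $u\in A\cup B\cup C$ adjacent to $D$. Second, restrict attention to the planar $3$-colorable subgraph $G'$ induced by $A\cup B\cup C$, whose classes are genuinely independent, and apply the previously proved $\eta\leqslant 36$ theorem to obtain a coloring $g'\colon A\cup B\cup C\to\{1,\ldots,36\}$ that additively separates all edges inside $A\cup B\cup C$. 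Third, combine: scale one of the two colorings so their contributions occupy separated ranges, and define the global labeling on all of $V(G)$. The modulus-$13$ residue should distinguish any edge with exactly one endpoint in $D$, while the mod-$13$-reduced-to-zero behavior of $A\cup B\cup C$ labels lets the $36$-coloring handle edges inside $A\cup B\cup C$. Finally, verify the label range: combining a value in $\{1,\ldots,36\}$ with a $13$-scaled contribution lands inside $\{1,\ldots,468\}$.

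The main obstacle I anticipate is the bookkeeping needed to guarantee that the two separating mechanisms do not interfere. In the $3$-colorable proof the cleanliness came from the fact that the $h$-sums lived in multiples of $12$ and the $f$-list values $\{12,24,36\}$ were also multiples of $12$, so everything reduced mod $12$ to either $0$ or nonzero in a controlled way. For the four-class version I must arrange an analogous arithmetic decoupling: the $D$-contribution must be visible modulo $13$ precisely on edges touching $D$, while being invisible (i.e.\ $\equiv 0 \pmod{13}$) on the three inner classes, and simultaneously the inner $36$-coloring must be invisible modulo $13$ so that it cannot accidentally cancel the $D$-signal. This forces a careful choice of which labels are multiples of $13$ versus which carry the mod-$13$ information, and an edge-by-edge case analysis (both endpoints inside $A\cup B\cup C$; exactly one endpoint in $D$; the impossible case of both in $D$) to confirm $S(u)\neq S(v)$. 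The phrase ``a bit more technical'' in the text signals exactly this: the conceptual skeleton is the same layered construction, but the modular arithmetic requires an extra coprime-style separation between the scales $13$ and $36$, and I would expect the proof to lean on $\gcd(13,36)=1$ or on an explicit range-separation argument to make the two colorings truly independent.
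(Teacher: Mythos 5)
Your overall architecture --- a mod-$13$ signal for $D$ layered on top of a solution for the three inner classes, with coprime scales --- matches the spirit of the paper's proof, but your concrete plan has a genuine gap at its central step: you cannot use the bound $\eta\leqslant 36$ for $G'=G[A\cup B\cup C]$ as a black box. An additive coloring $g'$ of $G'$ only guarantees that sums of $g'$-labels over neighborhoods \emph{inside} $G'$ differ across edges of $G'$. In $G$, however, two adjacent vertices $u,v\in A\cup B\cup C$ also collect contributions from their neighbors in $D$, and these contributions are different and uncontrolled. With your scaling ($13g'$ on the inner classes, $h$-values carrying the mod-$13$ residues on $D$) one gets
\begin{equation*}
S(u)-S(v)=13\bigl(S_{g'}(u)-S_{g'}(v)\bigr)+\bigl(S_{h}(u)-S_{h}(v)\bigr),
\end{equation*}
and while $S_{g'}(u)\neq S_{g'}(v)$, the term $S_{h}(u)-S_{h}(v)$ is an arbitrary integer (a vertex can have many $D$-neighbors), which can perfectly cancel the first term: the hypergraph lemma controls each individual sum $S_{h}(u)$ modulo $13$, but says nothing about the \emph{difference} $S_{h}(u)-S_{h}(v)$, which can be a nonzero multiple of $13$ (e.g.\ $S_h(u)=1$, $S_h(v)=14$). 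Coprimality of $13$ and $36$, or range separation, cannot repair this, because both $S_{g'}$ and $S_{h}$ are unbounded sums, so no arithmetic on the scales prevents a collision. (A smaller slip: as literally written, taking $h$ with values in $\{13,26,\ldots\}$ makes all its sums vanish mod $13$; the charitable reading above is the one you intend later.)

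This interference is exactly why the paper does not reuse the $36$-bound but re-runs the layered construction with a tool that tolerates offsets: Theorem \ref{ListBip} allows an arbitrary shift function $q$, and one takes $q(u)=S_{h}(u)$ so that the list coloring separates the \emph{total} sums, outer contributions included. Since Theorem \ref{ListBip} applies only to bipartite graphs, only the edges inside $A\cup B$ can be handled this way (with lists $\{156,312,468\}$); all edges meeting $C$ or $D$ are instead handled by modular signals: $h_{1}:C\rightarrow \mathbb{Z}_{12}$ making sums nonzero mod $12$ for vertices of $A\cup B$ with a neighbor in $C$, $h_{2}:D\rightarrow \mathbb{Z}_{13}$ making sums nonzero mod $13$ for vertices of $A\cup B\cup C$ with a neighbor in $D$, glued by an isomorphism $\sigma :\mathbb{Z}_{12}\times \mathbb{Z}_{13}\rightarrow \mathbb{Z}_{156}$ into a single $h:C\cup D\rightarrow \{1,\ldots ,156\}$. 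The CRT coordinates also settle the $C$--$D$ edges, since there $S_{h}(u)=\sigma ((0,S_{h_{2}}(u)))$ with $S_{h_{2}}(u)\neq 0$, while $S_{h}(v)=\sigma ((S_{h_{1}}(v),0))$. So your instinct that ``the two separating mechanisms must not interfere'' identified the right obstacle, but your proposed remedies do not overcome it; the missing idea is the offset mechanism of Theorem \ref{ListBip}, which forces the correct factorization $468=12\times 13\times 3$ (two modular layers plus a size-$3$ list for the bipartite pair $A\cup B$) rather than $36\times 13$.
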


\begin{proof}
Let $V(G)=A\cup B\cup C\cup D$ be a partition of the vertex set of $G$ into
four independent sets. Let $H_{1}$ be a subgraph of $G$ on the set of
vertices $(A\cup B)\cup C$ with the edge set%
\begin{equation*}
E(H_{1})=\{uv\in E(G):u\in A\cup B\text{ and }v\in C\}.
\end{equation*}%
Clearly $H_{1}$ is a bipartite graph. Hence, by Theorem \ref{colBP}, there
is a function $h_{1}:C\rightarrow \mathbb{Z}_{12}$ such that the sum 
\begin{equation*}
S_{h_{1}}(u)=\dsum\limits_{x\in N_{H_{1}}(u)}h_{1}(x)
\end{equation*}%
satisfies $S_{h_{1}}(u)\neq 0(\func{mod}12)$ for every vertex $u\in A\cup B$
with at least one neighbor in $C$. Now, Let $H_{2}$ be a subgraph of $G$ on
the set of vertices $(A\cup B\cup C)\cup D$ with the edge set%
\begin{equation*}
E(H_{2})=\{uv\in E(G):u\in A\cup B\cup C\text{ and }v\in D.
\end{equation*}%
Clearly $H_{2}$ is a bipartite graph. Hence, by Theorem \ref{colBP}, there
is a function $h_{2}:D\rightarrow $ $\mathbb{Z}_{13}$ such that the sum 
\begin{equation*}
S_{h_{2}}(u)=\dsum\limits_{x\in N_{H_{2}}(u)}h_{2}(x)
\end{equation*}%
satisfies $S_{h_{2}}(u)\neq 0(\func{mod}13)$ for every vertex $u\in (A\cup
B\cup C)$ having a neighbor in $D$.

Now, using functions $h_{1}$ and $h_{2}$, we define a new function $h:C\cup
D\rightarrow \{1,2,\ldots ,156\}$ as follows. First we extend $h_{1}$ and $%
h_{2}$ to the whole set $C\cup D$ by putting $h_{1}(x)=0$ for $x\in D$ and $%
h_{2}(x)=0$ for $x\in C$. Let $\sigma $ be a group isomorphism from $\mathbb{%
Z}_{12}\times \mathbb{Z}_{13}$ to $\mathbb{Z}_{156}$. For each $x\in C\cup D$
define $h(x)$ as the unique number in the range $\{1,2,\ldots ,156\}$
satisfying congruence%
\begin{equation*}
h(x)\equiv \sigma ((h_{1}(x),h_{2}(x))(\func{mod}156).
\end{equation*}%
Let%
\begin{equation*}
S_{h}(u)=\dsum\limits_{x\in N(u)\cap (C\cup D)}h(x)
\end{equation*}%
for every $u\in A\cup B$, where, as before, $S_{h}(u)=0$ if $N(u)\cap (C\cup
D)=\emptyset $. First we claim that $S_{h}(u)\neq 0(\func{mod}156)$ for
every vertex $u\in A\cup B$ which has at least one neighbor in $C\cup D$.
Indeed, since $\sigma $ is a group isomorphism we may write%
\begin{eqnarray*}
S_{h}(u) &=&\dsum\limits_{x\in N(u)\cap (C\cup D)}h(x)=\dsum\limits_{x\in
N(u)\cap (C\cup D)}\sigma ((h_{1}(x),h_{2}(x)) \\
&=&\sigma \left( \left( \dsum\limits_{x\in N(u)\cap
C}h_{1}(x),\dsum\limits_{x\in N(u)\cap D}h_{2}(x)\right) \right) =\sigma
((S_{h_{1}}(u),S_{h_{2}}(u))).
\end{eqnarray*}%
Hence, $S_{h}(u)$ cannot be zero in $\mathbb{Z}_{156}$, since at least one
of the sums $S_{h_{1}}(u)$ or $S_{h_{2}}(u)$ is non-zero in its respective
group. Notice also that $S_{h}(u)\neq 0(\func{mod}156)$ for every vertex $%
u\in C$ and having a neighbor in $D$, as in this case we have $%
S_{h}(u)=\sigma ((0,S_{h_{2}}(u)))$ and $S_{h_{2}}(u)\neq 0$ in $\mathbb{Z}%
_{13}$.

Consider now a bipartite subgraph $F$ of $G$ induced by the vertices $A\cup
B $. Assign to each vertex $u$ in $F$ the list $L(u)=\{156,312,468\}$, and
apply Theorem \ref{ListBip} with function $q(u)=S_{h}(u)$. Let $f$ be a
coloring satisfying the assertion of Theorem \ref{ListBip}. That is, $f$
satisfies condition $S_{f}(u)+S_{h}(u)\neq S_{f}(v)+S_{h}(v)$ for every edge 
$uv\in E(F)$, where%
\begin{equation*}
S_{f}(u)=\dsum\limits_{x\in N_{F}(u)}f(x).
\end{equation*}%
Putting things together we define a function $g$ on the whole set of
vertices $V(G)$ by joining $f$ and $h$:%
\begin{equation*}
g(x)=\left\{ 
\begin{array}{l}
h(x)\text{ if }x\in C\cup D \\ 
f(x)\text{ if }x\in A\cup B%
\end{array}%
\right. .
\end{equation*}%
We claim that $g$ is an additive coloring of $G$ over the set $\{1,2,\ldots
,468\}$. Let $S(u)$ be the sum of $g$-labels over the whole neighborhood $%
N(u)$, that is, $S(u)=S_{h}(u)+S_{f}(u)$. Let $uv$ be any edge in $G$. If $%
u\in A\cup B$ and $v\in C\cup D$, then $S_{h}(u)\neq 0(\func{mod}156)$ while 
$S_{f}(u)=0(\func{mod}156)$, thus $S(u)\neq 0(\func{mod}156)$. The other end
of the edge satisfies $S_{h}(v)=S_{f}(v)=0(\func{mod}156)$, so $S(v)=0(\func{%
mod}156)$. If $u\in A$ and $v\in B$, condition $S(u)\neq S(v)$ is guaranteed
by construction of $f$. We are left with the last case $u\in C$ and $v\in D$%
. Suppose on the contrary that $S(u)=S(v)$. Since $S_{f}(u)=S_{f}(v)=0(\func{%
mod}156)$, we get $S_{h}(u)=S_{h}(v)$ in $\mathbb{Z}_{156}$. But $%
S_{h}(u)=\sigma ((0,S_{h_{2}}(u)))$ and $S_{h_{2}}(u)\neq 0$ in $\mathbb{Z}%
_{13}$, while $S_{h}(v)=\sigma ((S_{h_{1}}(v),0))$. This contradiction
completes the proof.
\end{proof}

A set of vertices $I$ in a graph $G$ is called \emph{two-independent} if the
distance between any two vertices of $I$ is at least three. In \cite%
{BuCranston} it was proved that every planar graph of girth at least $13$
has a vertex decomposition into two sets $I$ and $F$ such that $I$ is
two-independent and $F$ induces a forest. Our last theorem follows easily
from this result.

\begin{theorem}
\label{Girth}Every planar graph of girth at least $13$ satisfies $\eta
(G)\leqslant 4$.
\end{theorem}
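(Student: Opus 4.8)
The plan is to use the decomposition of $G$ into a two-independent set $I$ and a forest-inducing set $F$ guaranteed by the result from \cite{BuCranston}, and then to separate the two kinds of edges by a parity argument combined with the list version of the Nullstellensatz, Theorem \ref{ListBip}. The starting observation is that two-independence forces a very rigid local picture: since $I$ is independent, every edge of $G$ lies either inside $F$ or between $I$ and $F$; and since any two vertices of $I$ are at distance at least three, no vertex of $F$ can have two neighbours in $I$. Thus each vertex of $F$ has at most one neighbour in $I$, while each vertex of $I$ has all of its neighbours in $F$.

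First I would colour every vertex of $I$ with the single odd value $1$ and reserve the two even values $2$ and $4$ for $F$. With this choice the sum $S(v)$ at a vertex $v\in I$ is a sum of even numbers, hence even, whereas the sum $S(u)$ at a vertex $u\in F$ having a neighbour in $I$ equals an even contribution from its $F$-neighbours plus the single odd value $1$ coming from its unique $I$-neighbour, hence odd. Consequently every edge between $I$ and $F$ joins a vertex with even sum to a vertex with odd sum, and is automatically resolved. This is exactly where two-independence is essential: if some vertex of $F$ had two neighbours in $I$, the two odd contributions would cancel modulo $2$ and the parity separation would collapse.

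It remains to resolve the edges lying inside $F$. For this I would orient each tree component of $F$ away from a chosen root, giving $F$ an orientation with indegree at most one, and then apply Theorem \ref{ListBip} to the bipartite graph $F$ with lists $L(u)=\{2,4\}$ of size two and with offset function $q(u)$ equal to $1$ if $u$ has a neighbour in $I$ and $0$ otherwise. Writing $S_f(u)=\sum_{x\in N_F(u)}f(x)$ for the resulting colouring $f:F\to\{2,4\}$, the theorem yields $q(u)+S_f(u)\neq q(v)+S_f(v)$ for every edge $uv$ of $F$. Since $q(u)$ is precisely the contribution of the (at most one) $I$-neighbour of $u$, we have $q(u)+S_f(u)=S(u)$, so these inequalities are exactly the required relations $S(u)\neq S(v)$ on the edges of $F$.

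Finally I would combine the two colourings into a single function $g$, equal to $1$ on $I$ and to $f$ on $F$; it uses only colours from $\{1,2,4\}\subseteq\{1,2,3,4\}$, so $\eta(G)\leqslant 4$. The only genuinely delicate point is the interaction between the two steps: the offset $q$ must record the $I$-contribution so that the Nullstellensatz step controls the true sums $S(u)$ rather than the partial sums $S_f(u)$, and the parity bookkeeping must be verified in all three cases (an edge inside $F$, with or without $I$-neighbours, and an edge between $I$ and $F$). Given the decomposition no quantitative estimate is needed; the main obstacle is simply arranging the colours so that the parity separation of the $I$--$F$ edges and the list-colouring of the forest do not interfere.
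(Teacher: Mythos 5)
Your proof is correct and takes essentially the same route as the paper: the same decomposition into a two-independent set $I$ (colored $1$) and a forest $F$ (colored from $\{2,4\}$), with the same parity observation resolving the $I$--$F$ edges. The only difference is cosmetic: the paper invokes Corollary \ref{TreeBP} (i.e.\ Theorem \ref{ListBip} with $q\equiv 0$) and handles the $I$-contributions implicitly through the parity case analysis, whereas you feed them into Theorem \ref{ListBip} via the offset function $q$, which makes explicit the verification the paper dismisses as ``easy to see.''
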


\begin{proof}
Let $V(G)=I\cup F$, where $I$ is $2$-independent and $F$ induces a forest.
By Corollary \ref{TreeBP} there is an additive coloring $f$ of the forest $F$
using labels $\{2,4\}$. Extend this coloring to the whole graph $G$ by
putting $f(i)=1$ for each vertex $i\in I$. It is easy to see that $f$ is an
additive coloring of $G$.
\end{proof}

\section{Finite abelian groups}

The problem of additive coloring can be considered in a more general setting
of Abelian (additive) groups. We may use elements of any such group $\Gamma $
as the labels of vertices and define the additive coloring the same way as
before. Accordingly to our main conjecture, as well as to the methods we
develop so far, one could expect that perhaps every graph has an additive
coloring over some group whose order is equal to the chromatic number of the
graph. We prove below that this is not true.

\begin{theorem}
For every $r\geqslant 2$ there is a graph $G_{r}$ such that $\chi (G_{r})=r$%
, and there is no additive coloring of $G_{r}$ over any finite Abelian group
of order $r$. But there is an additive coloring of $G_{r}$ in $\mathbb{Z}%
_{r+1}$.
\end{theorem}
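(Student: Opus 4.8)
The plan is to exhibit an explicit family $G_{r}$ and reduce the existence of an additive coloring over a group $\Gamma$ to a counting statement inside $\Gamma$. My first candidate is the prism $K_{r}\,\square\,K_{2}$: two copies $\{a_{1},\dots ,a_{r}\}$ and $\{b_{1},\dots ,b_{r}\}$ of $K_{r}$ joined by the matching $a_{i}b_{i}$. It contains $K_{r}$ and is $r$-colorable (color the vertex of copy $s$ in column $i$ by $i+s\pmod r$, which is legal since $r\geqslant 2$), so $\chi (G_{r})=r$. Now fix an additive coloring $f$ with values in an Abelian group $\Gamma$ of order $r$ and set $d_{i}=f(b_{i})-f(a_{i})$. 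A direct computation gives $S(a_{i})-S(a_{j})=d_{i}-d_{j}$ and $S(b_{i})-S(b_{j})=d_{j}-d_{i}$, so the clique edges in each copy force the $d_{i}$ to be pairwise distinct; as $|\Gamma |=r$ this means $\{d_{1},\dots ,d_{r}\}=\Gamma$. The matching edge $a_{i}b_{i}$ then yields $2d_{i}\neq T_{1}-T_{0}$, where $T_{s}$ is the sum of the labels in copy $s$, and $T_{1}-T_{0}=\sum_{i}d_{i}=\sum_{g\in \Gamma }g$.

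Writing $\tau =\sum_{g\in \Gamma }g$, the surviving requirement is $2d_{i}\neq \tau$ for every $i$. Since the $d_{i}$ already exhaust $\Gamma$, this is equivalent to $2g\neq \tau$ for all $g\in \Gamma$, i.e. $\tau \notin 2\Gamma$, where $2\Gamma =\{2g:g\in \Gamma \}$. Thus $G_{r}$ has no additive $\Gamma $-coloring precisely when $\tau \in 2\Gamma$. I would then invoke the classical fact that $\tau =\sum_{g\in \Gamma }g$ equals $0$ unless the Sylow $2$-subgroup of $\Gamma$ is nontrivial cyclic, in which case $\tau$ is the unique involution; combining this with a description of $2\Gamma$ via the Sylow $2$-subgroup shows $\tau \in 2\Gamma$ for every $\Gamma$ whose $2$-part is trivial, of rank at least two, or cyclic of order at least four. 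This already settles all $r\not\equiv 2\pmod 4$.

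The main obstacle is uniformity over \emph{all} groups of order $r$. The plain prism fails exactly when the Sylow $2$-subgroup of $\Gamma$ is $\mathbb{Z}_{2}$: then $\tau$ is the involution and $\tau \notin 2\Gamma$. Already at $r=2$ the prism is $C_{4}$, which \emph{does} admit a $\mathbb{Z}_{2}$-coloring, and one must instead use $C_{6}=K_{3,3}$ minus a perfect matching (a direct parity computation rules out $\mathbb{Z}_{2}$, while $f=(0,1,0,1,0,1)$ works over $\mathbb{Z}_{3}$). The remedy I would pursue is to replace the matching by a gadget converting the edge constraint from $2d_{i}\neq \tau$ into $d_{i}\neq c$ for a value $c$ that distinctness forces to occur among the $d_{i}$, so that the exclusion is violated for every $\Gamma$ regardless of the doubling map. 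Designing such a gadget while keeping the chromatic number equal to $r$ is the delicate point, since any single vertex that reads a global sum is adjacent to a whole $K_{r}$ and would push $\chi$ to $r+1$; the crown $C_{6}$ is the model I would try to generalize, producing a graph whose additive coloring over $\Gamma$ requires $r$ pairwise distinct values all avoiding one prescribed value---impossible in an $r$-element group by pigeonhole.

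For the upper bound over $\mathbb{Z}_{r+1}$ the extra element supplies exactly the needed slack. Here distinctness only forces the $d_{i}$ to be $r$ distinct elements of an $(r+1)$-element group, so they need \emph{not} exhaust it, and one can choose them to also satisfy the residual inequality ($2d_{i}\neq \sum_{j}d_{j}$ for the prism, or $d_{i}\neq c$ for the refined gadget, the latter being solvable since there remain $r$ admissible values for $r$ variables). Taking $d_{1},\dots ,d_{r}$ to be suitable distinct residues and setting $f(a_{i})=0$, $f(b_{i})=d_{i}$ (adjusted for whatever gadget is used) yields an explicit additive coloring in $\mathbb{Z}_{r+1}$. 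Together with the impossibility over every Abelian group of order $r$, this completes the proof.
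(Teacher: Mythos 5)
Your reduction for the prism $K_r\,\square\,K_2$ is carried out correctly: the clique edges force the differences $d_i=f(b_i)-f(a_i)$ to be pairwise distinct, hence to exhaust $\Gamma$, and the matching edges then demand $2d_i\neq\tau$ with $\tau=\sum_{g\in\Gamma}g$, so nonexistence holds exactly when $\tau\in 2\Gamma$. But, as you yourself observe, this fails whenever the Sylow $2$-subgroup of $\Gamma$ is $\mathbb{Z}_2$, i.e.\ for every $r\equiv 2\pmod 4$ (including $r=2$ and $r=6$). For those infinitely many values of $r$ you offer only a direction --- ``replace the matching by a gadget'' modeled on $C_6$ --- and you explicitly flag its design as ``the delicate point'' without carrying it out. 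That unexecuted gadget is precisely the mathematical content of the theorem, and since the positive half (the $\mathbb{Z}_{r+1}$-coloring) is also ``adjusted for whatever gadget is used,'' both halves of the statement remain unproved in the residual case. So there is a genuine gap: the proposal proves the theorem only for $r\not\equiv 2\pmod 4$.

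It is worth contrasting this with how the paper sidesteps the dependence on $\tau$ and the doubling map entirely. The paper's gadget $H$ is a path $a,x,b,y,c$ with $x$ and $y$ blown up into cliques $K_{r-1}$. Since $X\cup\{a\}$, $X\cup\{b\}$, $Y\cup\{b\}$, $Y\cup\{c\}$ are all copies of $K_r$, any proper $r$-coloring of $H$ --- in particular the coloring by sums $S(\cdot)$ over a group of order $r$ --- gives $S(a)=S(b)=S(c)$; and the neighborhood structure gives the identity $S(b)=S(a)+S(c)$, whence $S(a)=S(b)=S(c)=0$ in $\Gamma$, forcing $S(v)\neq 0$ for every vertex of $X\cup Y$. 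Attaching a $K_r$ whose $i$-th vertex lies inside such a gadget then requires $r$ pairwise distinct \emph{nonzero} sums in an $r$-element group --- impossible by pigeonhole, uniformly over all Abelian $\Gamma$ of order $r$. This realizes exactly the scheme you were hoping your hypothetical gadget would achieve ($r$ distinct values avoiding one prescribed value, here $0$), which is why the case split on the $2$-part never arises in the paper; the paper then completes the second half with an explicit $\mathbb{Z}_{r+1}$-labeling of this fixed graph, split only according to the parity of $r+1$.
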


\begin{proof}
Let $P$ denote a path on five vertices $a,x,b,y,c$ (in that order). Consider
a graph $H=H(r)$ obtained by blowing up each of the two vertices $x$ and $y$
to the clique $K_{r-1}$. Now, take $r$ copies of $H$, chose one vertex $%
v_{i} $ in any of the two cliques $K_{r-1}$ in each copy of $H$, and join
all these vertices mutually to form a new clique $K_{r}$. We claim that in
this way we constructed a graph $G_{r}$ satisfying the assertion of the
theorem. It is not hard to see that $\chi (G_{r})=r$. To prove the first
part of the theorem, suppose that $\Gamma $ is any Abelian group of order $r$%
, and there is a coloring $f:V(G_{r})\rightarrow \Gamma $ such that the sums 
$S(v)$ form a proper coloring of $G_{r}$. Notice that in any proper coloring
of $H$ with $r$ colors, the vertices $a$, $b$, and $c$ must have the same
color. Thus $s(a)=s(b)=s(c)$. Notice also that, by the definition of
additive coloring we have $S(b)=S(a)+S(c)$, which implies that $%
S(a)=S(b)=S(c)=0$ in every copy of $H$ in $G_{r}$. This implies in turn that 
$S(v)\neq 0$ for all other vertices of $G_{r}$. In particular, we get a
proper coloring of the clique $K_{r}$ by non-zero elements of $\mathbb{Z}%
_{r} $, which is not possible.

For the second assertion we define explicitly an additive coloring $%
f:V(G_{r})\rightarrow \mathbb{Z}_{r+1}$ as follows. Denote by $H_{i}$ the $i$%
th copy of the graph $H$ in $G_{r}$. Let $X_{i}$ and $Y_{i}$ denote the two
cliques $K_{r-1}$ in $H_{i}$ obtained by blowing up the vertices $x$ and $y$%
, respectively. Also, let $a_{i}$, $b_{i}$, and $c_{i}$ be the respective
copies of the end vertices and the middle vertex of the path $P$ in $H_{i}$.
Finally, let $v_{i}$ denote the unique vertex of $H_{i}$ belonging to the
clique $K_{r}$. We may assume that $v_{i}\in V(X_{i})$. We have to
distinguish two cases.

\begin{enumerate}
\item \emph{(The number }$r+1$\emph{\ is odd.) }Put $f(v_{i})=f(b_{i})=0$
and $f(a_{i})=f(c_{i})=i$ for all $i=1,2,\ldots ,r$. Then extend injectively
the coloring using all labels from the set $\{1,2,\ldots ,r\}\setminus
\{i,-i\}$ on each of the two cliques $X_{i}$ and $Y_{i}$. So, the total sum
of labels in each of the cliques $X_{i}$ and $Y_{i}$ is zero. Hence, we get $%
S(v_{i})=i$ and $S(a_{i})=S(b_{i})=S(c_{i})=0$. For any other vertex $u$ we
get $S(u)\neq 0$. Also, we cannot have conflicts inside cliques $X_{i}$ and $%
Y_{i}$ by injectivity.

\item \emph{(The number }$r+1$\emph{\ is even.)} Let $r+1=2k$. First we
construct our coloring on all copies $H_{i}$ for $i\neq k$. Put $%
f(v_{i})=f(b_{i})=f(c_{i})=0$ and $f(a_{i})=i$. Extend injectively the
coloring on the clique $X_{i}$ using all labels from the set $\{1,2,\ldots
,r\}\setminus \{i,-i\}$. So, the total sum of labels on $X_{i}$ is equal to $%
k$. Next, extend the coloring injectively to cliques $Y_{i}$ using all
labels from the set $\{1,2,\ldots ,r\}\setminus \{k\}$. Hence, the total sum
of labels over $Y_{i}$ is zero. Thus we get $S(v_{i})=k+i$, $%
S(a_{i})=S(b_{i})=k$, and $S(c_{i})=0$ for all $i\neq k$. For $u\in X_{i}$
we have $S(u)=k+i-f(u)\neq k$, since $f(u)\neq i$. For $u\in Y_{i}$ we have $%
S(u)=-f(u)\neq 0,k$. Also there are no conflicts inside cliques $X_{i}$ and $%
Y_{i}$ by injectivity. It remains to extend the coloring to the copy $H_{k}$%
. Put $f(v_{k})=0$, $f(a_{k})=1$, $f(b_{k})=k$, and $f(c_{k})=k-1$. Next put
injectively all labels from the set $\{1,2,\ldots ,r\}\setminus \{k,k+1\}$
to the vertices of $X_{k}$, and similarly for $Y_{k}$ using the set $%
\{0,1,\ldots ,r\}\setminus \{k,r\}$. So, the total sum over $X_{k}$ is $k-1$
and the total sum over $Y_{k}$ is $1$. Hence, we get $S(a_{k})=k-1$, $%
S(c_{k})=1$, $S(b_{k})=k$, and $S(v_{k})=0$. Since each vertex $u\in
X_{k}\cup Y_{k}$ satisfies $S(u)=-f(u)$, no other conflicts could appear.
\end{enumerate}

The proof is complete.
\end{proof}

Notice that graph $G_{4}$ from the above proof is planar, so we cannot get
our main conjecture for planar graphs using finite groups. Notice also, that 
$G_{2}$ is a tree, and $G_{3}$ is an outer planar graph, so the same
difficulty is true for planar graphs with smaller chromatic number. Perhaps
every $r$-colorable graph has an additive coloring modulo $r+1$.

We conclude this section with the following simple result.

\begin{theorem}
Let $A$ be a fixed Abelian group. The problem of deciding whether a given
graph $G$ has an additive coloring over $A$ is NP-complete if $\left\vert
A\right\vert \geqslant 3$, and polynomial for $A=\mathbb{Z}_{2}$.
\end{theorem}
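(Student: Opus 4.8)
The plan is to split the statement into two parts: (1) for every fixed Abelian group $A$ with $|A|\geqslant 3$, deciding additive colorability over $A$ is NP-complete, and (2) for $A=\mathbb{Z}_2$ the problem is solvable in polynomial time. For part (1), containment in NP is immediate: a coloring $f:V(G)\rightarrow A$ is a certificate of polynomial size, and checking that the induced sums $S(u)=\sum_{x\in N(u)}f(x)$ differ across every edge is done by one pass over the edges, computing each sum in $A$. So the work is in the hardness reduction.

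For NP-hardness with $|A|\geqslant 3$, I would reduce from a known NP-complete coloring-type problem, most naturally graph $3$-colorability (or, more generally, the chromatic-number decision problem), and build a gadget that forces the sum-labels $S(v)$ to behave like an ordinary proper coloring using only $|A|$ ``colors''. The natural device is to attach to each original vertex $v$ a rooted pendant structure whose private vertices can be labelled to realize any prescribed value of $S(v)\in A$, while contributing a fixed, controllable amount to the sum at $v$ itself; one then wants the constraint $S(u)\neq S(v)$ on original edges to encode exactly a proper coloring of $G$ by the $|A|$ elements of $A$. The freedom in choosing $f$ on the attached private vertices should let us set the effective color of each vertex independently to any element of $A$, so that a valid additive coloring of the enlarged graph corresponds precisely to a proper $|A|$-coloring of $G$. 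Since $|A|\geqslant 3$ and proper $k$-coloring is NP-complete for every fixed $k\geqslant 3$, this yields NP-hardness; the gadget must be designed uniformly for an arbitrary fixed $A$ of the given order, using only the group structure of $A$.

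The main obstacle I expect is the gadget construction: I must guarantee that the pendant structure (a) admits a labelling realizing every desired target sum at its attachment point, (b) does not create spurious sum-equalities among its own internal vertices or between internal vertices and the original graph, and (c) behaves correctly for every Abelian group of the given order, not just the cyclic one—so the argument cannot rely on $A=\mathbb{Z}_r$ and must use only that $|A|\geqslant 3$ together with generic group facts (e.g.\ that one can always solve a single linear equation $a+t=s$ for $t\in A$). Verifying the forward and backward directions of the reduction—that a proper $|A|$-coloring yields a valid additive coloring and conversely—will require careful bookkeeping of the induced sums, but it is routine once the gadget is in place.

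For part (2), the case $A=\mathbb{Z}_2$, I would observe that an additive coloring over $\mathbb{Z}_2$ is governed entirely by parities: $S(v)\in\{0,1\}$ is the parity of the number of neighbors of $v$ receiving label $1$. Writing $f(x)\in\{0,1\}$ as a vector over $\mathrm{GF}(2)$, the vector of sums is $S=Mf$ where $M$ is the adjacency matrix over $\mathrm{GF}(2)$, and the requirement is that $(Mf)_u\neq (Mf)_v$ for every edge $uv$, i.e.\ $(Mf)_u+(Mf)_v=1$ for all $uv\in E(G)$. This is a system of linear equations over $\mathrm{GF}(2)$ in the unknown vector $f$, namely $(\text{row}_u+\text{row}_v)\cdot f=1$ for each edge, and its solvability can be decided by Gaussian elimination in polynomial time. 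Hence the $\mathbb{Z}_2$ case reduces to linear algebra over a field and is polynomial, completing the proof.
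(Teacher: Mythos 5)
Your overall plan coincides with the paper's: reduce proper $|A|$-coloring (NP-complete for fixed $|A|\geqslant 3$) to additive coloring over $A$ by attaching pendant gadgets to every vertex, and handle $\mathbb{Z}_2$ by linear algebra over $\mathrm{GF}(2)$. However, for the hardness half you never actually construct the gadget: you list the properties (a)--(c) it must satisfy and explicitly defer it as ``the main obstacle,'' and that obstacle is the entire content of the proof, so the NP-hardness direction has a genuine gap as written. For the record, the gadget can be as small as possible: attach a single pendant vertex $v_i'$ to each vertex $v_i$ of $G$, giving a graph $G'$. Given a proper coloring $c:V(G)\rightarrow A$ (using the group elements as colors), fix a nonzero $a\in A$, set $f(v_i)=a$ if $c(v_i)=0$ and $f(v_i)=0$ otherwise, and then put $f(v_i')=c(v_i)-\sum_{x\in N_G(v_i)}f(x)$, which is always possible since a single group equation in one unknown is solvable. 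This forces $S(v_i)=c(v_i)$, so original edges inherit the proper coloring, while the pendant vertex has $S(v_i')=f(v_i)$, which differs from $S(v_i)=c(v_i)$ precisely because of the case split in the definition of $f(v_i)$: $a\neq 0$ when $c(v_i)=0$, and $0\neq c(v_i)$ otherwise. That case split is the one idea your desiderata do not by themselves supply --- the naive choice $f(v_i)=0$ for all $i$ fails exactly at vertices colored $0$, where the pendant vertex would collide with its attachment point. The converse direction is immediate, as you say: the sums of any additive coloring of $G'$, restricted to $V(G)$, form a proper coloring of $G$ with at most $|A|$ colors.

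Your $\mathbb{Z}_2$ argument, by contrast, is complete and correct, and in fact a bit more direct than the paper's: over $\mathrm{GF}(2)$ the disequality $S(u)\neq S(v)$ becomes the affine equation $(\mathrm{row}_u+\mathrm{row}_v)\cdot f=1$ for each edge $uv$, so the whole problem is a single linear system solvable by Gaussian elimination. The paper instead first tests bipartiteness (since the sums must themselves form a proper $2$-coloring $y$) and then checks solvability of $Mf=y$ over $\mathbb{Z}_2$ for the finitely many candidate vectors $y$ per component; the two approaches are equivalent, but yours avoids the enumeration of candidate colorings.
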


\begin{proof}
Let $\left\vert A\right\vert =k\geqslant 3$. For a given graph $G$, whose
vertex set is $V(G)=\{v_{1},\ldots ,v_{n}\}$, consider a new graph $%
G^{\prime }$ obtained by adding $n$ new vertices $\{v_{1}^{\prime },\ldots
,v_{n}^{\prime }\}$ and $n$ new edges $v_{i}v_{i}^{\prime }$ for $i=1,\ldots
,n$. We prove that $G$ is $k$-colorable (in the usual sense) if and only if $%
G^{\prime }$ is additively colorable over $A$. This will prove the first
assertion of the theorem.

Obviously, if $G^{\prime }$ has an additive coloring over $A$, then $G$ is $k
$-colorable in the usual sense. For the other implication, assume that $G$
is $k$-colorable, and fix a proper coloring $c$ of $G$ using $A$ as the set
of colors. Now fix a nonzero element $a\in A$ and define a new coloring $f$
of $G^{\prime }$ in the following way:

\begin{enumerate}
\item If $c(v_{i})=0$, then $f(v_{i})=a$.

\item If $c(v_{i})\neq 0$, then $f(v_{i})=0$.

\item $f(v_{i}^{\prime })=c(v_{i})-\dsum\limits_{x\in N_{G}(v_{i})}f(v_{i})$.
\end{enumerate}

We claim that $f$ is a desired additive coloring of $G^{\prime }$ over $A$.
Indeed, the sum of colors around each vertex $v_{i}$ satisfies%
\begin{equation*}
S(v_{i})=\dsum\limits_{x\in N_{G}(v_{i})}f(v_{i})+f(v_{i}^{\prime
})=c(v_{i}),
\end{equation*}%
so there are no conflicts in $G$. Also by definition of $f$ we have 
\begin{equation*}
S(v_{i}^{\prime })=f(v_{i})\neq c(v_{i})=S\left( v_{i}\right) 
\end{equation*}%
for each vertex $v_{i}^{\prime }$. This prove the claim.

For the second assertion just notice that the problem reduces to recognizing
if a given graph $G$ is bipartite, and then checking solvability of a system
of linear equations of the form $Mx=y$ over $\mathbb{Z}_{2}$, where $M$ is
the adjacency matrix of $G$, and $y$ is binary vector encoding a proper
coloring of $G$. There are actually two possible such vectors for a
connected bipartite graph $G$. This completes the proof.
\end{proof}

\section{Open problems}

We conclude the paper with a short list of open questions concerning
additive coloring of graphs.

\begin{conjecture}
Every graph $G$ satisfies $\eta (G)\leqslant \chi (G)$.
\end{conjecture}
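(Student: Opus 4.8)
The plan is to induct on the chromatic number $r=\chi(G)$, isolating the bipartite case as the genuine difficulty. We may assume $G$ is connected, since $\eta$ of a disjoint union is the maximum of the $\eta$'s of the components while $\chi$ is likewise a maximum, so the inequality is preserved. The base case $r=1$ is immediate: then $G$ is edgeless and the constant coloring $f\equiv 1$ works, giving $\eta(G)=1$. All content lies in $r\geq 2$, and—as already noted in the introduction—even $r=2$, i.e. whether $\eta(G)$ is bounded at all on bipartite graphs, is open; I therefore treat it as the crux.

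First I would try to prove $\eta(G)\leq 2$ for bipartite $G$ directly. Fix the bipartition $V=X\cup Y$. With labels in $\{1,2\}$ the neighbor-sum is $S(v)=\deg(v)+t(v)$, where $t(v)$ is the number of neighbors receiving label $2$, and the task is to choose the set labeled $2$ so that $S(x)\neq S(y)$ across every edge. When the endpoints of an edge have different degrees this is often free, so the hard instances are the (nearly) regular bipartite graphs, where one must separate sums that a priori coincide. Three attacks suggest themselves: an algebraic one via Theorem \ref{ListBip}; a probabilistic one, assigning labels at random and controlling the events $\{S(u)=S(v)\}$ by the Lov\'asz Local Lemma or entropy compression; and a structural/discharging argument restricted to sparse bipartite graphs.

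For the inductive step with $r\geq 3$ I would partition $V$ into independent sets $V_{1},\dots,V_{r}$ and peel off $V_{r}$, mirroring the constructions of Section 4 but aiming to \emph{reuse} the label set $\{1,\dots,r\}$ rather than enlarging it by a fresh modulus. The idea is to fix, by induction, an additive coloring of $G-V_{r}$ with labels in $\{1,\dots,r\}$, then choose labels on $V_{r}$ so as simultaneously to distinguish each vertex of $V_{r}$ from its neighbors and to leave untouched every edge already separated inside $G-V_{r}$.

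\textbf{Main obstacle.} I expect the bipartite case to be the real wall, and each standard tool falls short there. The algebraic route is structurally limited: Theorem \ref{ListBip} delivers lists of size $k+1$, where $k$ is the least possible maximum indegree over all orientations of $G$; to force all exponents at most $1$ (two labels) one needs an orientation with every indegree at most $1$, so $G$ would have to be a pseudoforest, and Combinatorial Nullstellensatz as used here cannot cross from $3$ to $2$ on a general bipartite graph. The probabilistic route also strains: for a $d$-regular bipartite graph $S(v)$ concentrates in a window of width $\Theta(\sqrt{d})$, so $\Pr[S(u)=S(v)]$ can be of order $d^{-1/2}$, while the dependency degree of an edge-event grows polynomially in $d$, and the product overshoots the Local-Lemma threshold. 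Any proof must also respect the tight examples $\eta(K_{n})=n$, and the graphs $G_{r}$ of Section 5 show that no argument phrased purely inside a group of order $r$ can succeed—the integrality, positivity, and ordering of the labels must be used essentially. The near-miss that each $G_{r}$ does admit a coloring in $\mathbb{Z}_{r+1}$ points to a more accessible intermediate target, namely $\eta(G)\leq\chi(G)+1$ via colorings modulo $r+1$; the final stubborn unit separating this from the conjecture is exactly the obstruction embodied by the cliques.
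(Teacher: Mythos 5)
The statement you were asked about is not a theorem of the paper at all: it is Conjecture~1 in the ``Open problems'' section, explicitly described by the authors as widely open --- it is not even known whether $\eta(G)$ is bounded on bipartite graphs. The paper contains no proof of it, and neither does your text. What you have written is a research program, not a proof: you correctly dispose of the trivial case $r=1$, correctly identify the bipartite case as the crux, and then candidly explain why each of your three proposed attacks (Combinatorial Nullstellensatz via Theorem~\ref{ListBip}, the Local Lemma, and discharging) fails to reach $\eta(G)\leqslant 2$. A plan that ends by naming the wall it cannot climb is honest, but it leaves the entire content of the statement unproved.

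Beyond the unproved base case, your inductive step for $r\geqslant 3$ has a concrete gap of its own. You propose to fix an additive coloring of $G-V_{r}$ with labels in $\{1,\ldots,r\}$ and then label $V_{r}$ so as to ``leave untouched every edge already separated inside $G-V_{r}$,'' but no such mechanism exists with a fixed label set: every vertex of $G-V_{r}$ with a neighbor in $V_{r}$ has its sum $S(u)$ shifted by the total weight of its $V_{r}$-neighbors, and these shifts differ from vertex to vertex, so previously separated adjacent pairs can collide again. The paper's Section~4 constructions evade exactly this problem by working modulo a fresh modulus for each peeled class (labels on $C$ are distinguished mod $12$, on $C\cup D$ mod $156$), so that new contributions cannot cancel old separations --- and this is precisely why those arguments multiply the bound ($36$, $468$) rather than keeping it at $\chi(G)$. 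Reusing $\{1,\ldots,r\}$ forfeits that protection, and you offer nothing to replace it. Finally, note that your intermediate target $\eta(G)\leqslant\chi(G)+1$ via colorings modulo $r+1$ is itself only Conjecture~2 of the paper, so it cannot be invoked as a stepping stone either.
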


It is not known whether this is true for bipartite graphs. It is not even
known if $\eta (G)$ is bounded for bipartite graphs. A heuristic argument is
that the statement of the conjecture holds trivially if we extend the set of
labels to real numbers. Indeed, any proper coloring of a $k$-colorable graph 
$G$ with a set of $k$ real numbers which is independent over rationals,
gives an additive coloring of $G$. Another direction is to consider additive
colorings in finite Abelian groups.

\begin{conjecture}
Every graph $G$ has an additive coloring modulo $\chi (G)+1$.
\end{conjecture}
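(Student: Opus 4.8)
The plan is to reduce the conjecture to a question in linear algebra over $\mathbb{Z}_{r+1}$, where $r=\chi(G)$. Fix a proper coloring $c:V(G)\to\{1,2,\ldots,r\}$, regarded as a function into $\mathbb{Z}_{r+1}$. If one could find a labeling $f:V(G)\to\mathbb{Z}_{r+1}$ realizing $S(v)=c(v)$ for every vertex $v$, then the induced sum-function $S$ would itself be a proper coloring of $G$, and the conjecture would follow at once, since adjacent vertices $u,v$ satisfy $c(u)\neq c(v)$. Writing $A$ for the adjacency matrix of $G$, the requirement $S(v)=c(v)$ is exactly the linear system $Af=c$ over $\mathbb{Z}_{r+1}$. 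So the first step is to decide when the prescribed proper coloring $c$ lies in the column space of $A$ modulo $r+1$.

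By the Chinese Remainder Theorem it suffices to solve $Af\equiv c$ modulo each prime power dividing $r+1$ and recombine, mirroring the product-of-coprime-moduli device used in Theorem \ref{Norin} and in the construction of the graphs $G_r$. When $A$ is invertible modulo $r+1$, i.e. $\gcd(\det A,\,r+1)=1$, the system has a unique solution and the conjecture follows for \emph{any} proper $c$. The heart of the matter is therefore the singular case: for a prime $p\mid r+1$ with $p\mid\det A$, the image of $A$ is a proper subspace of $\mathbb{Z}_p^{V}$, and one must arrange that some proper coloring lands inside it. Here the slack promised by the modulus $\chi(G)+1$, one residue more than the number of color classes, should be exploited: one keeps the freedom to range over all proper colorings using every residue of $\mathbb{Z}_{r+1}$, and also the freedom to relax $S(v)=c(v)$ to the weaker demand that $S$ merely be proper.

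A complementary line, avoiding exact control of the sums, is to run the greedy framework of Lemma \ref{col(H)} and Theorem \ref{Norin} directly in the single group $\mathbb{Z}_{r+1}$: order the vertices and, at each step, pick a label defeating the at most $\func{col}(G)-1$ dangerous backward constraints. The obstacle here is immediate and, I expect, the genuine crux of the problem. For a dense $r$-chromatic graph (say a complete $r$-partite graph with large parts) $\func{col}(G)$ vastly exceeds $r+1$, so there is no room in $\mathbb{Z}_{r+1}$ to dodge all forbidden values; and the list argument of Theorem \ref{ListBip} also collapses, since the list size it requires exceeds the group order (and that theorem is confined to bipartite graphs in any case). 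In short, the local greedy method powering every bounded result in this paper fails precisely when $\chi(G)$ is small relative to the density of $G$, and no purely local replacement is visible.

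I therefore expect the main difficulty to be global and structural: one must certify, for every prime $p$ dividing $\chi(G)+1$, that $A$ admits a proper coloring in its mod-$p$ column space. This is delicate because the mod-$p$ rank of $A$ depends subtly on the graph; bipartite graphs, for instance, carry large mod-$2$ kernels, exactly the regime relevant when $\chi(G)$ is odd. A sensible first milestone, already testing the whole approach, is to settle the conjecture for graphs whose adjacency matrix is nonsingular modulo $\chi(G)+1$, and then to treat $\chi(G)=2$ (modulus $3$, an odd prime) and $\chi(G)=3$ (modulus $4=2^2$) directly, using the kernel structure of bipartite and tripartite adjacency matrices to construct the required proper coloring inside the image of $A$.
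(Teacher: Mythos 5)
You should first be clear about the status of this statement in the paper: it is one of the open problems in the final section, not a theorem. The paper offers no proof of it whatsoever; the only related result is the theorem of Section 5, which constructs graphs $G_{r}$ with $\chi(G_{r})=r$ admitting no additive coloring over any Abelian group of order $r$ (showing the modulus $\chi(G)+1$ would be best possible), together with an explicit, ad hoc additive coloring of those particular graphs in $\mathbb{Z}_{r+1}$. So there is no proof to compare yours against, and any attempt at this statement is an attempt to resolve an open conjecture, not to reprove a known result.

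Judged as a proof attempt, your proposal has a genuine gap, which to your credit you identify yourself: everything rests on producing, for each prime $p$ dividing $\chi(G)+1$, a proper coloring $c$ lying in the mod-$p$ column space of the adjacency matrix $A$, and you give no argument for this in the singular case. The invertible case ($\gcd(\det A,\,r+1)=1$) is correct but is a thin slice of the problem. Note also that insisting on $S=c$ exactly is strictly stronger than what the conjecture asks; this extra strength is what makes the reduction linear, and it is also why the reduction cannot be the whole story, since the actual requirement (that $S$ merely be proper) is not a linear condition, and relaxing to it forfeits the linear-algebraic machinery. A concrete calibration of how wide the gap is: your proposed ``first milestone'' of settling $\chi(G)=2$ directly (modulus $3$) would already imply $\eta(G)\leqslant 3$ for \emph{every} bipartite graph, since labels may be taken in $\{1,2,3\}$ and sums that differ mod $3$ differ as integers; but the paper states explicitly that it is not even known whether $\eta(G)$ is bounded on bipartite graphs. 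So what you frame as a warm-up case is essentially the hardest open question in the area, and none of the paper's tools --- the greedy/coloring-number framework (which, as you observe, collapses when the coloring number exceeds $r+1$), the Combinatorial Nullstellensatz argument of Theorem \ref{ListBip} (whose required list size exceeds the group order), or the CRT splitting of Theorem \ref{Norin} --- reaches it.
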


If true this is best possible, as we proved in section 5.

Our last problem arose as a vertex analog of the famous \emph{antimagic
labeling conjecture} of Ringel \cite{HartsfieldRingel}.

\begin{conjecture}
Let $G$ be a simple graph on $n$ vertices in which no two vertices have the
same neighborhood. Then there is a bijection $f:V(G)\rightarrow \{1,2,\ldots
,n\}$ such that%
\begin{equation*}
\dsum\limits_{x\in N(u)}f(x)\neq \dsum\limits_{x\in N(v)}f(x)
\end{equation*}%
for any two distinct vertices $u$ and $v$.
\end{conjecture}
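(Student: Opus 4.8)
The plan is to recast the statement as a hyperplane-avoidance question and then attack it with the Combinatorial Nullstellensatz, exactly in the spirit of Theorem~\ref{ListBip}. Write $A$ for the adjacency matrix of $G$ and let $f=(f(v))_{v\in V}$ denote the vector of labels, so that the neighbor-sum vector is $S=Af$. For a pair of distinct vertices $u,v$ set $w_{uv}=\mathrm{row}_u(A)-\mathrm{row}_v(A)$; its entries are $+1$ on $N(u)\setminus N(v)$, $-1$ on $N(v)\setminus N(u)$, and $0$ elsewhere. The twin-free hypothesis says precisely that $w_{uv}\neq 0$ for every pair, and $S(u)=S(v)$ is equivalent to the single linear equation $w_{uv}\cdot f=0$. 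Thus the task is to produce a bijection $f\colon V\to\{1,\dots,n\}$ lying off each of the $\binom{n}{2}$ nonzero hyperplanes $\{w_{uv}\cdot f=0\}$. As a sanity check, over generic reals this is immediate, since any assignment of $n$ reals independent over $\mathbb{Q}$ avoids all of these rational hyperplanes; so the entire difficulty lives in the requirement that $f$ be a bijection onto $\{1,\dots,n\}$.

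To force bijectivity algebraically I would introduce a Vandermonde factor. Consider
\[
Q(f)=\prod_{1\le i<j\le n}(f_i-f_j)\cdot\prod_{\{u,v\}}(w_{uv}\cdot f),
\]
a polynomial in $n$ variables of total degree $\binom{n}{2}+\binom{n}{2}=n(n-1)$. I would apply Theorem~\ref{Null} with every list equal to $A_i=\{1,\dots,n\}$, so that each $|A_i|=n$ permits exponents up to $n-1$. The available degree $n(n-1)$ matches $\deg Q$ exactly, so the hypothesis of the Nullstellensatz collapses to the single requirement that the top monomial $f_1^{n-1}\cdots f_n^{n-1}$ have nonzero coefficient in $Q$. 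If it does, the Nullstellensatz yields $f\in\{1,\dots,n\}^n$ with $Q(f)\neq 0$; the nonvanishing Vandermonde factor forces the coordinates to be distinct, so $f$ is a bijection, and the nonvanishing second factor gives $S(u)\neq S(v)$ for all pairs. Everything thus reduces to one coefficient.

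The main obstacle is exactly this coefficient. Expanding the Vandermonde as $\sum_{\pi\in S_n}\mathrm{sgn}(\pi)\prod_i f_i^{\pi(i)-1}$, the coefficient of $f_1^{n-1}\cdots f_n^{n-1}$ in $Q$ equals $\sum_{\pi\in S_n}\mathrm{sgn}(\pi)\,c_\pi$, where $c_\pi$ is the coefficient of $\prod_i f_i^{\,n-\pi(i)}$ in $\prod_{\{u,v\}}(w_{uv}\cdot f)$. Unlike the situation in Theorem~\ref{ListBip}, the linear factors here do not carry uniform signs, so there is no monotonicity obstructing cancellation, and for general twin-free $G$ I do not expect a clean closed form; this is precisely where the statement, a vertex analog of Ringel's antimagic conjecture, becomes genuinely hard. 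A realistic program would be to first establish nonvanishing of this signed sum for structured families (regular graphs, or graphs carrying a degree-one vertex that can be labeled last by a greedy induction), and in parallel to pursue a probabilistic route: a uniformly random bijection already satisfies $w_{uv}\cdot f\neq 0$ whenever $|N(u)\triangle N(v)|\le 2$, and for the remaining pairs one needs permutation anticoncentration (Littlewood--Offord type bounds) strong enough to survive a union bound over the $\binom{n}{2}$ pairs. Closing that union bound, equivalently controlling the cancellation in the coefficient above, is the crux.
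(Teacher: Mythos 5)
This statement is not proved in the paper at all: it is the last of the open problems in Section 6, posed as a vertex analog of Ringel's antimagic labeling conjecture, so there is no proof of it to compare yours against, and any complete argument would be a new result rather than a reconstruction. That said, your reduction is set up correctly: with $A$ the adjacency matrix, twin-freeness gives $w_{uv}\neq 0$ for every pair, equality of neighbor sums is exactly the hyperplane condition $w_{uv}\cdot f=0$, and the polynomial $Q(f)=\prod_{i<j}(f_i-f_j)\cdot\prod_{\{u,v\}}(w_{uv}\cdot f)$ has total degree $n(n-1)$, so Theorem \ref{Null} applied with all $A_i=\{1,\dots,n\}$ would indeed yield the desired bijection, provided the coefficient of $f_1^{n-1}\cdots f_n^{n-1}$ in $Q$ is nonzero. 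Your side observations are also sound: the rationally-independent-reals sanity check (which the paper itself invokes in Section 6 for its first conjecture), and the fact that a random bijection handles pairs with $\left\vert N(u)\triangle N(v)\right\vert \leqslant 2$ automatically.

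However, the proposal is not a proof, and you say so yourself: everything hinges on the nonvanishing of that single coefficient, and that is where the entire difficulty of the conjecture lives. The mechanism that makes Theorem \ref{ListBip} work --- each variable occurring with a uniform sign in every factor, so that copies of a monomial cannot cancel --- is unavailable here, destroyed both by the mixed signs inside the vectors $w_{uv}$ and by the alternating signs in the Vandermonde expansion; nothing prevents the signed sum $\sum_{\pi}\mathrm{sgn}(\pi)\,c_{\pi}$ from vanishing. Moreover, the Nullstellensatz is only a sufficient criterion: the coefficient could vanish for particular twin-free graphs even if the conjecture holds for them, so the program as stated cannot close without either a genuinely new idea for controlling cancellation or a retreat to structured families, as you propose. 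In short: a sensible, correctly executed reduction with a genuine gap at exactly the crux --- which is consistent with the statement remaining an open conjecture in the paper.
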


\begin{acknowledgement}
Sebastian Czerwi\'{n}ski and Jaros\l aw Grytczuk acknowledge a partial
support from Polish Ministry of Science and Higher Education Grants (MNiSW)
(N N201 271335) and (MNiSW) (N N206 257035). 
\end{acknowledgement}

\end{document}